\newtheorem{theorem}{Theorem}[section]
\newtheorem{lemma}[theorem]{Lemma}
\newtheorem{proposition}[theorem]{Proposition}
\newtheorem{corollary}[theorem]{Corollary}
\theoremstyle{definition}
\newtheorem{definition}[theorem]{Definition}
\newtheorem{example}[theorem]{Example}
\theoremstyle{remark}
\newtheorem{remark}[theorem]{Remark}
\numberwithin{equation}{section}
\begin{document}
	\title[Continuous Frame in Hilbert $C^{\ast}$-Modules]{Continuous Frame in Hilbert $C^{\ast}$-Modules}
\author[M. Rossafi, M. Ghiati, M. Mouniane, F. Chouchene, S. Kabbaj]{Mohamed Rossafi$^{1}$, M'hamed Ghiati$^{2}$, Mohammed Mouniane$^{2*}$, Frej Chouchene$^{3}$ \MakeLowercase{and} Samir Kabbaj$^{4}$}

\address{$^{1}$LaSMA Laboratory Department of Mathematics Faculty of Sciences, Dhar El Mahraz University Sidi Mohamed Ben Abdellah, P. O.  Box 1796 Fez Atlas, Morocco}
\email{\textcolor[rgb]{0.00,0.00,0.84}{rossafimohamed@gmail.com}}

\address{$^{2}$Laboratory Analysis, Geometry and Applications Department of Mathematics, Faculty of Sciences, University of Ibn Tofail, P. O.  Box 133 Kenitra, Morocco}
\email{\textcolor[rgb]{0.00,0.00,0.84}{mhamed.ghiati@uit.ac.ma; mouniane.mohammed@uit.ac.ma}}

\address{$^{3}$ Department of Mathematics, Higher School of Schiences and Technology of Hammam Sousse, University of Sousse, Tunisia}
\email{\textcolor[rgb]{0.00,0.00,0.84}{frej.chouchene@essths.u-sousse.tn}}

\address{$^{4}$Laboratory of Partial Differential Equations, Spectral Algebra and Geometry Department of Mathematics, Faculty of Sciences, University of Ibn Tofail, P. O.  Box 133 Kenitra, Morocco}
\email{\textcolor[rgb]{0.00,0.00,0.84}{samkabbaj@yahoo.fr}}


\subjclass[2010]{Primary 41A58; Secondary 42C15.}
\keywords{Continuous Frame, $\ast$-Continuous Frame, $C^{\ast}$-algebra, Hilbert $C^{\ast}$-modules.}
\address{$^*$Corresponding author}

\begin{abstract}
	Frame theory is an exciting, dynamic and fast paced subject with applications in numerous fields of mathematics and engineering. In this paper we study Continuous Frame and introduce Continuous Frame with $C^{\ast}$-valued bounds. Also, we establich some properties.
\end{abstract}

\maketitle

\section{Introduction and preliminaries}
The concept of frames in Hilbert spaces has been introduced by Duffin and Schaeffer \cite{Duffin-1952-TransAmerMathSoc-1952} in 1952 to study some deep problems in nonharmonic Fourier series, after the fundamental paper \cite{Daubechies-JMATHPHYS-1986} by Daubechies, Grossman and Meyer, frame theory began to be widely used, particularly in the more specialized context of wavelet frames and Gabor frames \cite{Gabor-J.Electr.Eng-1946}.

Traditionally, frames have been used in signal processing, image processing, data compression, and sampling theory. 
A discreet frame is a countable family of elements in a separable Hilbert space which allows for a stable, not necessarily unique, decomposition of an arbitrary element into an expansion of the frame elements. 
The concept of a generalization of frames to a family indexed by some locally compact space endowed with a Radon measure was proposed by G. Kaiser \cite{Kaiser-book-2011} and independently by Ali,
Antoine and Gazeau \cite{Ali-Antoine-1993-AnnPhys}. These frames are known as continuous frames. Gabardo and
Han in \cite{Gabardo-Adv.Comput-2003} called these  frames associated with measurable spaces, Askari-Hemmat,
Dehghan and Radjabalipour in \cite{Askari-Hemmat-PAMS-2000} called them generalized frames and in mathematical physics, they are referred to as coherent states \cite{Ali-Antoine-1993-AnnPhys}.

A discrete frame in a Hilbert $C^{\ast}$-module $\mathcal{H}$ is a sequence $\{f_{i}\}_{i\in I}$  for which there exist positive constants $A, B > 0$ called frame bounds such that
\begin{equation*}
	A\langle x, x\rangle\leq\sum_{i\in I}\langle x,f_{i}\rangle\langle f_{i}, x\rangle\leq B\langle x, x\rangle, \;\forall x\in\mathcal{H}.
\end{equation*}
Many generalizations of the concept of frame have been defined in Hilbert Spaces and Hilbert $C^{\ast}$-modules \cite{Rossafi-Kabbaj-2020-Asia-Europ, Rossafi-Bour-2019-Asia, Rossafi-Kabbaj-2019-Lin.Algebra, Rossafi-Kabbaj-AnnUnivPead-2018, Rossafi-Kabbaj-2018-Asia-Math, Rossafi-Akhlidj-Math-Recherche-2018}.

In this paper, we introduce the notions of Continuous Frame on a Hilbert $C^{\ast}$-Modules over an unital $C^{\ast}$-algebra which is a generalization of discrete frames, the  $\ast$-Continuous Frame which is a generalization of $\ast$-Frame in Hilbert $C^{\ast}$-Modules introduced by A. Alijani, M. Dehghan \cite{Alijani-Dehghan-2011} and we establish some new results.

The paper is organized as follows, we continue this introductory section we briefly recall the definitions and basic properties of Hilbert $C^{\ast}$-modules. In Section 2, we introduce the Continuous Frame, the pre-Continuous frame operator and the Continuous frame operator. In Section 3, we introduce the $\ast$-Continuous frame and the $\ast$-Continuous frame operator. In Section 4, we discuss the stability problem for Continuous Frame and $\ast$-Continuous frame.
In section 5, we introduce dual $*$-frames and extend the characterization of dual frames \cite{Christensen-Book} to dual *-frames associated to a given $*$-frame.
In the following we briefly recall the definitions and basic properties of Hilbert $C^{\ast}$-modules. Our reference for $C^{\ast}$-algebras is \cite{Davidson-1996}. For a $C^{\ast}$-algebra $\mathcal{A}$ if $a\in\mathcal{A}$ is positive we write $a\geq 0$ and $\mathcal{A}^{+}$ denotes the cone of positive elements of $\mathcal{A}$.
\begin{definition}[\cite{Kaplansky-1953-AmJMath}]
	Let $ \mathcal{A} $ be a unital $C^{\ast}$-algebra and $\mathcal{H}$ be a left $ \mathcal{A} $-module, such that the linear structures of $\mathcal{A}$ and $ \mathcal{H} $ are compatible. $\mathcal{H}$ is a pre-Hilbert $\mathcal{A}$-module if $\mathcal{H}$ is equipped with an $\mathcal{A}$-valued inner product $\langle.,.\rangle :\mathcal{H}\times\mathcal{H}\rightarrow\mathcal{A}$, such that is sesquilinear, positive definite and respects the module action. In other words,
	\begin{itemize}
		\item [1.] $ \langle x,x\rangle\geq0 $ for all $ x\in\mathcal{H} $ and $ \langle x,x\rangle=0$ if and only if $x=0$.
		\item [2.] $\langle ax+y,z\rangle=a\langle x,z\rangle+\langle y,z\rangle$ for all $a\in\mathcal{A}$ and $x,y,z\in\mathcal{H}$.
		\item[3.] $ \langle x,y\rangle=\langle y,x\rangle^{\ast} $ for all $x,y\in\mathcal{H}$.
	\end{itemize}	 
\end{definition}
For $x\in\mathcal{H}, $ we define $\|x\|=\|\langle x,x\rangle\|^{\frac{1}{2}}$. If $\mathcal{H}$ is complete with $\|\cdot\|$, it is called a Hilbert $\mathcal{A}$-module or a Hilbert $C^{\ast}$-module over $\mathcal{A}$. 
For every $a$ in $C^{\ast}$-algebra $\mathcal{A}$, we have $\mid a\mid=(a^{\ast}a)^{\frac{1}{2}}$ and the $\mathcal{A}$-valued norm on $\mathcal{H}$ is defined by $\mid x \mid =\langle x, x\rangle^{\frac{1}{2}}$ for $x\in\mathcal{H}$.

Let $\mathcal{H}$ and $\mathcal{K}$ be two Hilbert $\mathcal{A}$-modules. A map $T:\mathcal{H}\rightarrow\mathcal{K}$ is said to be adjointable if there exists a map $T^{\ast}:\mathcal{K}\rightarrow\mathcal{H}$ such that $\langle Tx,y\rangle_{\mathcal{A}}=\langle x,T^{\ast}y\rangle_{\mathcal{A}}$ for all $x\in\mathcal{H}$ and $y\in\mathcal{K}$.

We also reserve the notation $End_{\mathcal{A}}^{\ast}(\mathcal{H},\mathcal{K})$ for the set of all adjointable operators from $\mathcal{H}$ to $\mathcal{K}$ and $End_{\mathcal{A}}^{\ast}(\mathcal{H},\mathcal{H})$ is abbreviated to $End_{\mathcal{A}}^{\ast}(\mathcal{H})$.

The following lemmas will be used to prove our main results.
\begin{lemma}[\cite{Paschke-1973-TransAmMath}]
	\label{1}
	Let $\mathcal{H}$ be Hilbert $\mathcal{A}$-module. If $T\in End_{\mathcal{A}}^{\ast}(\mathcal{H})$, then $$\langle Tx,Tx\rangle\leq\|T\|^{2}\langle x,x\rangle, \qquad x\in\mathcal{H}.$$
\end{lemma}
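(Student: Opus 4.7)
The plan is to reduce the claimed module-valued inequality to the assertion that $\|T\|^{2}I - T^{\ast}T$ is a positive element of the $C^{\ast}$-algebra $End_{\mathcal{A}}^{\ast}(\mathcal{H})$, and then read off the desired estimate by pairing against $x$. First I would note that $T^{\ast}T$ is selfadjoint, that $\langle T^{\ast}Tx,x\rangle = \langle Tx,Tx\rangle \geq 0$ in $\mathcal{A}$, and that the $C^{\ast}$-identity applied in $End_{\mathcal{A}}^{\ast}(\mathcal{H})$ yields $\|T^{\ast}T\| = \|T\|^{2}$. Together these imply that the spectrum of $T^{\ast}T$ in the $C^{\ast}$-algebra $End_{\mathcal{A}}^{\ast}(\mathcal{H})$ is contained in $[0,\|T\|^{2}]$.

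Next I would use continuous functional calculus to form the selfadjoint square root $S := (\|T\|^{2}I - T^{\ast}T)^{1/2}$; this is legitimate because, by the spectral mapping theorem, $\|T\|^{2}I - T^{\ast}T$ has spectrum contained in $[0,\|T\|^{2}]$. Then $S \in End_{\mathcal{A}}^{\ast}(\mathcal{H})$ satisfies $S^{\ast}S = S^{2} = \|T\|^{2}I - T^{\ast}T$.

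The final step is to pair against an arbitrary $x \in \mathcal{H}$:
\[
\|T\|^{2}\langle x,x\rangle - \langle Tx,Tx\rangle = \langle (\|T\|^{2}I - T^{\ast}T)x,x\rangle = \langle Sx, Sx\rangle \geq 0,
\]
where the last inequality uses only axiom (i) of the $\mathcal{A}$-valued inner product. Rearranging gives the claim.

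I do not foresee a genuine obstacle; the only subtlety is verifying that the two relevant notions of positivity line up, namely that a $C^{\ast}$-positive element of $End_{\mathcal{A}}^{\ast}(\mathcal{H})$ of the form $R^{\ast}R$ automatically produces $\mathcal{A}^{+}$-valued pairings via $\langle R^{\ast}Rx,x\rangle = \langle Rx,Rx\rangle$. That identification is what lets me conclude the argument in a single line after having formed $S$.
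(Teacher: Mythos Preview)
Your argument is correct and is essentially the standard proof: since $End_{\mathcal{A}}^{\ast}(\mathcal{H})$ is a $C^{\ast}$-algebra, $T^{\ast}T$ is positive with norm $\|T\|^{2}$, so $\|T\|^{2}I - T^{\ast}T$ is positive and admits a selfadjoint square root $S$, whence $\|T\|^{2}\langle x,x\rangle - \langle Tx,Tx\rangle = \langle Sx,Sx\rangle \geq 0$. Note, however, that the paper does not actually supply a proof of this lemma; it is quoted from Paschke~\cite{Pas} as a preliminary fact, so there is no in-paper argument to compare against. One minor comment: you do not need the observation $\langle T^{\ast}Tx,x\rangle \geq 0$ in $\mathcal{A}$ to get positivity of $T^{\ast}T$ in $End_{\mathcal{A}}^{\ast}(\mathcal{H})$, since any element of the form $R^{\ast}R$ in a $C^{\ast}$-algebra is automatically positive; the rest of your reasoning already handles the passage from $C^{\ast}$-positivity back to $\mathcal{A}^{+}$-valued pairings.
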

\begin{lemma}[\cite{Aramba-2007-ProcAmMathSoc}] 
	\label{l2} 
	Let $\mathcal{H}$ and $\mathcal{K}$ two Hilbert $\mathcal{A}$-modules and $T\in End^{\ast}(\mathcal{H},\mathcal{K})$. 
	So the following statements are equivalent:
	\begin{itemize}
		\item [(i)] $T$ is surjective.
		\item [(ii)] $T^{\ast}$ is bounded below with respect to norm, i.e., there is $m>0$ such that $ m\|x\| \leq \|T^{\ast}x\|$ for all $x\in\mathcal{K}$.
		\item [(iii)] $T^{\ast}$ is bounded below with respect to the inner product, i.e., there is $m'>0$ such that $m'\langle x,x\rangle \leq \langle T^{\ast}x,T^{\ast}x\rangle $ for all $x\in\mathcal{K}$.
	\end{itemize}
\end{lemma} 
\begin{lemma}[\cite{Alijani-Dehghan-2011}] \label{Lemma0-1} 
	\label{3} 
	Let $\mathcal{H}$ and $\mathcal{K}$ two Hilbert $\mathcal{A}$-modules and $T\in End^{\ast}(\mathcal{H},\mathcal{K})$. Then:
	\begin{itemize}
		\item [(i)] If $T$ is injective and $T$ has a closed range, then the adjointable map $T^{\ast}T$ is invertible and $$\|(T^{\ast}T)^{-1}\|^{-1}\leq T^{\ast}T\leq\|T\|^{2}.$$
		\item  [(ii)]	If $T$ is surjective, then the adjointable map $TT^{\ast}$ is invertible and $$\|(TT^{\ast})^{-1}\|^{-1}\leq TT^{\ast}\leq\|T\|^{2}.$$
	\end{itemize}	
\end{lemma}
\begin{lemma}[\cite{Yosida-1978-FunctionalAnalysis}] 
	Let $(\varOmega ,\mu)$ be a measure space, $X$ and $Y$ are tow Banach spaces, $\lambda : X \rightarrow Y$ be a bounded linear operator and $ f :\varOmega \rightarrow Y $ measurable function, then $$\lambda (\int_\varOmega f d\mu)	=\int_\varOmega(\lambda f)d\mu.$$
\end{lemma}
\begin{lemma}[\cite{Xu2008}]
	Let $ T \in End^{*}_{\mathcal{A}}(\mathcal{H},\mathcal{K}) $ be  a bounded operator with closed range  $\mathcal{R} (T)$.  
	Then there exists a bounded operator $ T^{\dagger} \in End^{*}_{\mathcal{A}}( \mathcal{K}, \mathcal{H}) $ for which 
	$$ TT^{\dagger}x=x      ,  \qquad x\in\mathcal{R} (T). $$
\end{lemma}
\begin{proposition}[\cite{Murphy-1990}]\label{Proposition-0-1}
	If $\varphi: \mathcal{A} \longrightarrow \mathcal{B}$ is a $*$-homomorphism between $C^{*}$-algebras, then $\varphi$ has the following properties.
	\begin{enumerate}
		\item  $\varphi(1)=1$.
		\item  If a is invertible, then so is $\varphi(a)$, and $\varphi\left(a^{-1}\right)=\varphi(a)^{-1}$.
		\item The $*$-homomorphism $\varphi$ is positive and increasing, that is, $\varphi\left(\mathcal{A}^{+}\right) \subseteq \mathcal{B}^{+}$, and if $a_{1} \leq$ $a_{2}$, then $\varphi\left(a_{1}\right) \leq \varphi\left(a_{2}\right)$.
		\item  For $a \in \mathcal{A}$, we have $\sigma(\varphi(a)) \subseteq \sigma(a)$, and if $\varphi$ is injective, then $\sigma(\varphi(a))=\sigma(a)$.
		\item  If a is strictly positive, then so is $\varphi(a)$.
	\end{enumerate}
	
\end{proposition}
\section{Continuous Frame in Hilbert $C^{\ast}$-modules over a unital $C^{\ast}$-algebra}
Let $X$ be a Banach space, $(\Omega,\mu)$ a measure space and a measurable function $f:\Omega\to X$. Integral of the Banach-valued function $f$ has been defined by Bochner and others. Most properties of this integral are similar to those of the integral of real-valued functions. Because every $C^{\ast}$-algebra and Hilbert $C^{\ast}$-module is a Banach space thus we can use this integral and its properties.

\noindent
Let $(\Omega,\mu)$ be a measure space, we define
\begin{equation*}
	L^{2}(\Omega, \mathcal{A})=\Bigg\{\varphi: \Omega \to \mathcal{A}: \bigg\|\int_{\Omega}\varphi(\omega)\varphi(\omega)^{\ast}d\mu(\omega)\bigg\|<\infty\Bigg\}.
\end{equation*}
For any $\varphi, \psi \in L^{2}(\Omega, \mathcal{A})$, if the $\mathcal{A}$-valued inner product is defined by \begin{equation*}\langle \varphi ,\psi\rangle=\int_{\Omega} \varphi(\omega)\psi(\omega)^{\ast} d\mu(w),
\end{equation*} 
the norm is defined by $\|\varphi\|=\|\langle \varphi, \varphi\rangle\|^{\frac{1}{2}}$, then $L^{2}(\Omega, \mathcal{A})$ is a Hilbert $C^{\ast}$-module \cite{lance}.

The following definition generalize the Definition 2.1. in \cite{Rahimi-Najati-Dehghan-2006} to the context of Hilbert $C^{\ast}$-module. 
\begin{definition}
	Let $\mathcal{H}$ be a Hilbert $\mathcal{A}$-module and $(\Omega,\mu)$ a measure space. A mapping $F: \Omega \to \mathcal{H}$ is called a continuous frame with respect to $(\Omega,\mu)$, if
	\begin{enumerate}
		\item   for all $x\in\mathcal{H}, w \to \langle x, F_{w}\rangle $ is a measurable function on $\Omega$,
		\item   there exist constants $A, B>0$ such that
		\begin{equation} \label{2.1}
			A\langle x,x\rangle
			\leq\int_{\Omega}\langle x, F_{w}\rangle\langle F_{w}, x\rangle d\mu(w)\leq B\langle x,x\rangle, \forall x\in\mathcal{H}.
		\end{equation}
	\end{enumerate}
	The constants $A$ and $B$ are called continuous frame bounds. If $A=B$ we call this continuous frame a continuous tight frame, and if $A=B=1$ it is called a Parseval continuous frame. If only the right-hand inequality of \eqref{2.1} is satisfied, we call  $F: \Omega \to \mathcal{H}$ a continuous Bessel mapping with Bessel bound $B$.
\end{definition}
\begin{example}
	Let $\mathcal{A}$ be the $C^{\ast}$-algebra $\mathbb{M}_{2,2}(\mathbb{C})$ and $\mathcal{H}$ be the Hilbert $\mathbb{M}_{2,2}(\mathbb{C})$-module $\mathbb{M}_{2,2}(\mathbb{C})$.
	
	\noindent
	Let $(\Omega,\mu)=([0, 1], d\lambda)$ where $d\lambda$ is the measure of Lebesgue,
	
	\noindent
	define $F: \Omega \to \mathcal{H}$ by $F_{w}=\begin{bmatrix}
		w&0\\0&w
	\end{bmatrix}$.
	
	So 
	\begin{equation*}
		\int_{\Omega}\langle T, F_{w}\rangle\langle F_{w}, T\rangle d\lambda=\int_{0}^{1}TF_{w}^{\ast}F_{w}T^{\ast}d\lambda
		=\int_{0}^{1}w^{2}TT^{\ast}d\lambda
		=\frac{1}{3}TT^{\ast}
		=\frac{1}{3}\langle T, T\rangle.
	\end{equation*}
	Then $F$ is a tight continuous frame for $\mathcal{H}$ with respect to $([0, 1], d\lambda)$.
\end{example}
Suppose that $F$ is a continuous frame for $\mathcal{H}$ with respect to $(\Omega,\mu)$. The operator $T_{F}: \mathcal{H} \to L^{2}(\Omega, \mathcal{A})$ defined by $T_{F}x(\omega)=\langle x, F_{\omega}\rangle, \omega\in\Omega$, is called the analysis operator. $T_{F}$ is adjointable and $T_{F}^{\ast}: L^{2}(\Omega, \mathcal{A}) \to \mathcal{H}$ is given by $T_{F}^{\ast}\varphi=\int_{\Omega}\varphi(\omega)F_{\omega} d\mu(\omega)$, is called the synthesis operator.
\begin{proposition}
	Let $ F$ be a continuous frame with respect to $ (\Omega,\mu )$ for $\mathcal{H}$ with bounds $A,B$ and let $ T :\mathcal{H}  \rightarrow \mathcal{K} $ be a bounded operator with a closed range $ R_{T}$. Then $ TF $ is a continuous frame for $R_{T}$ with the bounds $ A\lVert T^{\dagger}\lVert^{-2},B\lVert T\lVert^{2}$.
\end{proposition}
\begin{proof}
	It is clear that $ w\rightarrow V F(w)$  is measurable for all $ f \in \mathcal{H} $.   We may assume that
	$T$ is onto. If $x \in \mathcal{K}$, then
	\begin{equation*}
		\int_{\Omega}\langle x,TF_{w}\rangle\langle TF_{w}, x\rangle d\mu(w)\leq B\langle Tx,Tx\rangle \leq B\lVert T\lVert^{2}\langle x,x\rangle.  
	\end{equation*}
	which proves that $TF_{w}$ is Bessel. For the lower frame condition, let $ f\in K$. Then $$ \lVert x\lVert \leq\lVert T^{\dagger}\lVert\lVert T^{*}x\lVert $$ and
	\begin{equation*}
		\int_{\Omega}\langle x,TF_{w}\rangle\langle TF_{w}, x\rangle d\mu(w)\geq A\langle T^{*}x,T^{*}x\rangle \geq A\lVert T^{\dagger}\lVert^{-2}\langle x,x\rangle  ,
	\end{equation*}
	which gives the result.	
\end{proof}
\begin{corollary}
	If $F$ is a continuous frame with respect to $(\Omega
	, \mu)$ for $\mathcal{H}$ with bounds
	$A,B$ and $ T : \mathcal{H} \rightarrow \mathcal{K}$ is a bounded surjective operator, then $TF$ is a continuous frame
	with respect to $(\Omega
	, \mu)$ for $ \mathcal{K}$ with the bounds $ A\lVert T^{\dagger}\lVert^{-2},B\lVert T\lVert^{2}$.
\end{corollary}
\begin{theorem}\label{thm-Operator-TF}
	Let $(\Omega
	, \mu)$ be a measure space and let $F_{w}$ be a Bessel mapping from $\Omega$
	to $ \mathcal{H}$. Then the operator $ T_{F} : L^2(\Omega
	, \mu) \rightarrow \mathcal{H} $ weakly defined by
	$$\langle T_{F}\varphi,x\rangle =\int_{\Omega}\varphi(w)\langle F_{w} x,x\rangle d\mu(w) ,\qquad x \in \mathcal{H}.  $$
	is well defined, linear, bounded, and its adjoint is given by $$T^{\ast}_{F} : \mathcal{H} \rightarrow  L^2(\Omega
	, \mu) ,(T^{\ast}_{F}x)(w)=\langle x,F_{w}\rangle, \qquad w \in\Omega.  $$
	The operator $T_{F}$ is called a pre-frame operator or synthesis operator and $ T^{\ast}_{F}
	$ is called an analysis operator of $ F$.\\
	\begin{proof}
		The proof is straightforward. 
	\end{proof}		
	The converse of Theorem \ref{thm-Operator-TF} holds when $\mu $ is a finite measure.
\end{theorem}
\begin{definition}
	Let $F$ be a continuous frame for $\mathcal{H}$ with respect to $(\Omega,\mu)$. We define the frame operator $S:\mathcal{H}\to\mathcal{H}$ by 
	$$Sx=T_{F}^{\ast}T_{F}x=\int_{\Omega}\langle x, F_{w}\rangle F_{w} d\mu(w), \forall x\in\mathcal{H}.$$
\end{definition}
\begin{theorem}
	The frame operator  $S$ is positive, self-adjoint and invertible.
\end{theorem}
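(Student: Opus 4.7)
The plan is to verify the three claims (self-adjointness, positivity, invertibility) in turn, with the first two being essentially formal and the third requiring a bit of care because we are working in a Hilbert $C^{\ast}$-module rather than a Hilbert space.

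First, self-adjointness is immediate from the factorization $S = T_F^{\ast} T_F$ given in the definition, since for any $T \in \mathrm{End}_{\mathcal{A}}^{\ast}(\mathcal{H}, L^{2}(\Omega,\mathcal{A}))$ the composition $T^{\ast}T$ is adjointable and satisfies $(T^{\ast}T)^{\ast} = T^{\ast}T$. For positivity I would compute, for an arbitrary $x \in \mathcal{H}$,
\[
\langle Sx, x\rangle = \langle T_F^{\ast}T_F x, x\rangle = \langle T_F x, T_F x\rangle = \int_{\Omega}\langle x, F_{w}\rangle\langle F_{w}, x\rangle \, d\mu(w),
\]
using the definition of the $\mathcal{A}$-valued inner product on $L^{2}(\Omega,\mathcal{A})$. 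The left-hand side of the frame inequality \eqref{2.1} then gives $\langle Sx, x\rangle \geq A\langle x,x\rangle \geq 0$, so $S$ is positive.

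For invertibility I would reinterpret both sides of \eqref{2.1} as operator inequalities on $\mathcal{H}$, namely $AI_{\mathcal{H}} \leq S \leq BI_{\mathcal{H}}$ in $\mathrm{End}_{\mathcal{A}}^{\ast}(\mathcal{H})$. Rescaling, this gives the positive self-adjoint operator $R := I_{\mathcal{H}} - B^{-1}S$ satisfying $0 \leq R \leq (1 - A/B) I_{\mathcal{H}}$. I would then pass to operator norms: since $R$ is positive and self-adjoint, $\|R\|$ is controlled by the operator inequality, yielding $\|R\| \leq 1 - A/B < 1$. Consequently $B^{-1}S = I_{\mathcal{H}} - R$ is invertible in $\mathrm{End}_{\mathcal{A}}^{\ast}(\mathcal{H})$ by the Neumann series $\sum_{n\geq 0} R^{n}$, and hence $S$ itself is invertible with $S^{-1} = B^{-1}\sum_{n\geq 0} R^{n}$; as an inverse of a self-adjoint positive adjointable operator, $S^{-1}$ is again self-adjoint and positive, with bounds $B^{-1} I \leq S^{-1} \leq A^{-1} I$ obtainable from the same inequalities.

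The main technical point I expect to be the obstacle is the step that converts the $\mathcal{A}$-valued inequality $R \leq (1 - A/B) I_{\mathcal{H}}$ into the scalar norm bound $\|R\| \leq 1 - A/B$. In a Hilbert space this is immediate from the functional calculus, but in the Hilbert $C^{\ast}$-module setting I would justify it by the identity $\|R\| = \sup_{\|x\|\leq 1}\|\langle Rx, x\rangle\|$ valid for self-adjoint adjointable operators, combined with the fact that $a \leq b$ in $\mathcal{A}^{+}$ implies $\|a\| \leq \|b\|$; applying this to $\langle Rx,x\rangle \leq (1 - A/B)\langle x,x\rangle$ and taking the supremum over $\|x\| \leq 1$ gives the required estimate and closes the argument.
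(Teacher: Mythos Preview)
Your proof is correct and follows essentially the same route as the paper: establish the operator inequality $A\,I_{\mathcal{H}}\leq S\leq B\,I_{\mathcal{H}}$ from the frame condition and read off positivity and invertibility from it (your self-adjointness via $S=T_F^{\ast}T_F$ is equivalent to the paper's direct computation of $\langle Sx,y\rangle$). The only difference is that the paper simply asserts invertibility from $A\langle x,x\rangle\leq\langle Sx,x\rangle\leq B\langle x,x\rangle$, whereas you spell out the Neumann-series justification; your extra care with the norm bound on $R$ is warranted and correct.
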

\begin{proof}
	Let $x, y\in\mathcal{H}$, we have
	\begin{align*}
		\langle Sx, y\rangle&=\Big\langle\int_{\Omega}\langle x, F_{w}\rangle F_{w} d\mu(w), y\Big\rangle\\&=\int_{\Omega}\langle x, F_{w}\rangle\langle F_{w}, y\rangle d\mu(w)\\&=\Big\langle x, \int_{\Omega}\langle y, F_{w}\rangle F_{w}d\mu(w)\Big\rangle\\&=\langle x, Sy\rangle,
	\end{align*}
	so the operator $S$ is self-adjoint.
	
	\noindent
	Let $x\in\mathcal{H}$, by the definition of a continuous frame for $\mathcal{H}$ we have
	\begin{equation*}
		A\langle x,x\rangle
		\leq\int_{\Omega}\langle x, F_{w}\rangle\langle F_{w}, x\rangle d\mu(w)\leq B\langle x,x\rangle.
	\end{equation*}
	So
	\begin{equation}\label{1.}
		A\langle x,x\rangle\leq\langle Sx,x\rangle\leq B\langle x,x\rangle.
	\end{equation}
	Thus $S$ is positive and invertible.
\end{proof}
\begin{proposition}\label{Proposition-2.7}
	Let $(\Omega, \mu)$ 
	be a measure space, where $ \mu$ is a $\sigma$ -finite measure and let $
	F :\Omega 
	\rightarrow \mathcal{H}$ be a measurable function. If the mapping $ T_{F} : L^2(\Omega
	, \mu) \rightarrow \mathcal{H} $ defined by
	$$\langle T_{F}\varphi,h\rangle =\int_{\Omega}\varphi(w)\langle F_{w} x,h\rangle d\mu(w) ,\qquad h \in  L^2(\Omega
	, \mu)  $$
	is a bounded operator, then F is Bessel.
\end{proposition}
\begin{proof}
	By Theorem \ref{thm-Operator-TF}, we have
	$$(T^{\ast}h)(w) = \langle h,F \rangle. $$ 
	Hence, for each $ h \in \mathcal{H}$,
	\begin{equation*}
		\int_{\Omega}\langle h,F_{w}\rangle\langle F_{w}, h\rangle d\mu(w) = \lVert Th\lVert^{2} \leq\lVert h \lVert^{2} \lVert T\lVert^{2}.  
	\end{equation*}
\end{proof}
\begin{theorem}
	Let $(\Omega
	, \mu)$ be a measure space where $\mu$ is a $\sigma$ finite measure. The mapping
	$F : 
	\rightarrow H $ is a continuous frame with to $(\Omega
	, \mu)$ for $\mathcal{H}$ if and only if the operator
	TF as defined in Theorem \ref{thm-Operator-TF} is a bounded and onto operator.
\end{theorem}
\begin{proof}
	Let $F$ be a continuous frame. Then, by Theorem \ref{thm-Operator-TF}, $T_{F}$ is bounded and
	$$T^{\ast}_{F} : \mathcal{H} \rightarrow  L^2(\Omega
	, \mu) ,(T^{\ast}_{F}x)(w)=\langle x,F_{w}\rangle ,\qquad w \in\Omega. $$
	Hence, for each $x \in \mathcal{H}$
	\begin{equation*}
		\lVert T^{\ast}_{F}x\lVert^{2} =	\int_{\Omega}\langle x,F_{w}\rangle\langle F_{w}, x\rangle d\mu(w)
	\end{equation*}
	is one to one and so $T_{F}$ is onto.
	Conversely, let $T^{\ast}_{F}$ be a bounded and onto operator. Then,  there exists
	a bounded operator $ T^{\dagger}_{F}$
	such that $T_{F} T^{\dagger}_{F}
	x = x$ for all $x \in \mathcal{H}.$ Since $T_{F}$ is
	bounded, by  {Proposition \ref{Proposition-2.7}}, $ F $ is Bessel and
	\begin{equation*}
		\lVert T^{\ast}_{F}x\lVert^{2} =	\int_{\Omega}\langle x,F_{w}\rangle\langle F_{w}, x\rangle d\mu(w) \qquad x\in \mathcal{H}.
	\end{equation*}
	$ \lVert x\lVert^{2} \leq \lVert T^{\ast}_{F}x\lVert^{2} \lVert T^{\dagger}_{F}
	\lVert^{2}, \qquad x \in \mathcal{H}.$
	\begin{equation*}
		\lVert T^{\dagger}_{F}\lVert^{-2}	\lVert x\lVert^{2}
		\leq\int_{\Omega}\langle x, F_{w}\rangle\langle F_{w}, x\rangle d\mu(w), \qquad x \in \mathcal{H}.
	\end{equation*}
\end{proof}
\begin{theorem}
	Let $F$ be a continuous frame for $\mathcal{H}$ with respect to $(\Omega,\mu)$ with the frame operator $S$. Let $V\in End_{\mathcal{A}}^{\ast}(\mathcal{H},\mathcal{K})$ be a surjective operator. Then $VF$ is a continuous frame for $\mathcal{K}$ with the frame operator $VSV^{\ast}$.
\end{theorem}

\begin{proof}
	The mapping $VF: \Omega \to \mathcal{K}$ is measurable.
	Therefore,
	\begin{equation*}
		A\langle V^{\ast}x,V^{\ast}x\rangle
		\leq\int_{\Omega}\langle V^{\ast}x, F_{w}\rangle\langle F_{w}, V^{\ast}x\rangle d\mu(w)\leq B\langle V^{\ast}x,V^{\ast}x\rangle, \forall x\in\mathcal{K}.
	\end{equation*}
	So
	\begin{equation*}
		A\|(VV^{\ast})^{-1}\|^{-1}\langle x,x\rangle
		\leq\int_{\Omega}\langle x, VF_{w}\rangle\langle VF_{w}, x\rangle d\mu(w)\leq B\|V\|^{2}\langle x,x\rangle, \forall x\in\mathcal{K}.
	\end{equation*}	
	Then $VF$ is a continuous frame for $\mathcal{K}$.
	Moreover,
	$$VSV^{\ast}x=V\int_{\Omega}\langle V^{\ast}x, F_{w}\rangle F_{w} d\mu(w)=\int_{\Omega}\langle x, VF_{w}\rangle VF_{w} d\mu(w), \forall x\in\mathcal{K}.$$
	Then $VSV^{\ast}$ is the frame operator of the continuous frame $VF$.
\end{proof}
\begin{corollary}
	Let $F$ be a continuous frame for $\mathcal{H}$ with respect to $(\Omega,\mu)$ with the frame operator $S$. Then $S^{-\frac{1}{2}}F$ is a Parseval continuous frame for $\mathcal{H}$.
\end{corollary}
\begin{proof}
	Result the next theorem by taking $V=S^{-\frac{1}{2}}$.
\end{proof}

\section{$\ast$-Continuous frame in Hilbert $C^{\ast}$-modules over a unital $C^{\ast}$-algebra}

\begin{definition}
	Let $\mathcal{H}$ be a Hilbert $\mathcal{A}$-module and $(\Omega,\mu)$ a measure space. A mapping $F: \Omega \to \mathcal{H}$ is called a $\ast$-continuous frame with respect to $(\Omega,\mu)$, if
	\begin{itemize}
		\item [1.] for all $x\in\mathcal{H}, w \to \langle x, F_{w}\rangle $ is a measurable function on $\Omega$,
		\item [2.] there exist two strictly nonzero elements $A, B$ in $\mathcal{A}$ such that
		\begin{equation} \label{2..1}
			A\langle x,x\rangle A^{\ast}
			\leq\int_{\Omega}\langle x, F_{w}\rangle\langle F_{w}, x\rangle d\mu(w)\leq B\langle x,x\rangle B^{\ast}, \forall x\in\mathcal{H}.
		\end{equation}
	\end{itemize}
	The elements $A$ and $B$ are called $\ast$-continuous frame bounds. If $A=B$ we call this $\ast$-continuous frame a tight $\ast$-continuous frame, and if $A=B=1_{\mathcal{A}}$ it is called a Parseval $\ast$-continuous frame. If only the right-hand inequality of \eqref{2..1} is satisfied, we call  $F: \Omega \to \mathcal{H}$ a $\ast$-continuous Bessel mapping with $\ast$-Bessel bound $B$.
\end{definition}
\begin{remark}
	The set of all continuous frame can be considered as a subset of $\ast$-continuous frame.
\end{remark}
\begin{example}
	Let $\mathcal{A}$ be the $C^{\ast}$-algebra $\Bigg\{\begin{bmatrix}
		a&0\\0&b
	\end{bmatrix}: a, b\in\mathbb{C}\Bigg\}$, then $\mathcal{A}$ is a Hilbert $C^{\ast}$-module over itself.\\
	Define $F: \Omega \to \mathcal{A}$ by $F_{w}=\begin{bmatrix}
		w&0\\0&w+1
	\end{bmatrix}$.\\
	So 
	\begin{align*}
		\int_{\Omega}\langle T, F_{w}\rangle\langle F_{w}, T\rangle d\lambda&=\int_{0}^{1}TF_{w}^{\ast}F_{w}T^{\ast}d\lambda\\
		&= \begin{bmatrix}
			a&0\\0&b
		\end{bmatrix}\int_{0}^{1}\begin{bmatrix}
			w^{2}&0\\0&(w+1)^{2}
		\end{bmatrix}d\lambda\begin{bmatrix}
			\bar{a}&0\\0&\bar{b}
		\end{bmatrix}\\
		&= \begin{bmatrix}
			a&0\\0&b
		\end{bmatrix}\begin{bmatrix}
			\frac{1}{3}&0\\0&\frac{7}{3}
		\end{bmatrix}\begin{bmatrix}
			\bar{a}&0\\0&\bar{b}
		\end{bmatrix}\\
		&= \begin{bmatrix}
			\frac{1}{\sqrt{3}}&0\\0&\sqrt{\frac{7}{3}}
		\end{bmatrix}\begin{bmatrix}
			|a|^{2}&0\\0&|b|^{2}
		\end{bmatrix}\begin{bmatrix}
			\frac{1}{\sqrt{3}}&0\\0&\sqrt{\frac{7}{3}}
		\end{bmatrix}.
	\end{align*}
	Hence 
	$$\begin{bmatrix}
		\frac{1}{\sqrt{3}}&0\\0&\frac{1}{\sqrt{3}}
	\end{bmatrix}\langle T, T\rangle\begin{bmatrix}
		\frac{1}{\sqrt{3}}&0\\0&\frac{1}{\sqrt{3}}
	\end{bmatrix}\leq\int_{\Omega}\langle T, F_{w}\rangle\langle F_{w}, T\rangle d\lambda\leq\begin{bmatrix}
		\sqrt{\frac{7}{3}}&0\\0&\sqrt{\frac{7}{3}}
	\end{bmatrix}\langle T, T\rangle\begin{bmatrix}
		\sqrt{\frac{7}{3}}&0\\0&\sqrt{\frac{7}{3}}
	\end{bmatrix}.$$
	Then $F$ is a $\ast$-continuous frame for $\mathcal{A}$ with respect to $([0, 1], d\lambda)$, with bounds $\begin{bmatrix}
		\frac{1}{\sqrt{3}}&0\\0&\frac{1}{\sqrt{3}}
	\end{bmatrix}$ and $\begin{bmatrix}
		\sqrt{\frac{7}{3}}&0\\0&\sqrt{\frac{7}{3}}
	\end{bmatrix}$.
\end{example}

Suppose that $F$ is a $\ast$-continuous frame for $\mathcal{H}$ with respect to $(\Omega,\mu)$. The operator $T_{F}: \mathcal{H} \to L^{2}(\Omega, \mathcal{A})$ defined by $T_{F}x(\omega)=\langle x, F_{\omega}\rangle, \omega\in\Omega$, is called the analysis operator. $T_{F}$ is adjointable and $T_{F}^{\ast}: L^{2}(\Omega, \mathcal{A}) \to \mathcal{H}$ is given by $T_{F}^{\ast}\varphi=\int_{\Omega}\varphi(\omega)F_{\omega} d\mu(\omega)$, is called the synthesis operator.
\begin{definition}
	Let $F$ be a $\ast$-continuous frame for $\mathcal{H}$ with respect to $(\Omega,\mu)$. We define the frame operator $S:\mathcal{H}\to\mathcal{H}$ by $Sx=T_{F}^{\ast}T_{F}x=\int_{\Omega}\langle x, F_{w}\rangle F_{w} d\mu(w), \forall x\in\mathcal{H}$.
\end{definition}
\begin{theorem}
	The  $\ast$-continuous frame operator $S$ is bounded, positive, self-adjoint and invertible.
\end{theorem}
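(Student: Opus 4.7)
The plan is to follow the same four-step architecture as in the scalar integral-frame theorem, checking self-adjointness, positivity, boundedness, and invertibility in turn. Self-adjointness and positivity transfer verbatim: using linearity of the Bochner integral and sesquilinearity of the inner product, one computes $\langle Sx, y\rangle = \int_{\Omega}\langle x, F_w\rangle\langle F_w, y\rangle\, d\mu(w) = \langle x, Sy\rangle$, while the lower $\ast$-bound yields $\langle Sx, x\rangle \geq A\langle x,x\rangle A^{\ast} \geq 0$ in $\mathcal{A}^{+}$, so $S$ is positive.

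For boundedness, I would factor $S = T_F^{\ast}T_F$ and estimate $T_F$ using the upper bound: since $\|T_F x\|^2 = \|\int_{\Omega}\langle x, F_w\rangle\langle F_w, x\rangle\, d\mu(w)\| \leq \|B\langle x,x\rangle B^{\ast}\| \leq \|B\|^2\|x\|^2$ (using the inequality $\|BaB^{\ast}\| \leq \|B\|^2\|a\|$ for $a \in \mathcal{A}^{+}$, obtained by writing $BaB^{\ast} = (Ba^{1/2})(Ba^{1/2})^{\ast}$), one gets $\|S\| \leq \|T_F\|^2 \leq \|B\|^2$.

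The main obstacle is invertibility, because the lower bound $A\langle x,x\rangle A^{\ast} \leq \langle Sx, x\rangle$ is a $C^{\ast}$-algebra-valued sandwich, not a scalar operator inequality of the form $cI \leq S$. Reading ``strictly nonzero'' as invertible in $\mathcal{A}$ (as is standard in the $\ast$-frame literature), the plan is to convert the $\ast$-bound into a scalar lower bound on $\|Sx\|$ as follows. Writing $|x| = \langle x,x\rangle^{1/2} \in \mathcal{A}^{+}$, we have $A\langle x,x\rangle A^{\ast} = (A|x|)(A|x|)^{\ast}$, whence $\|A\langle x,x\rangle A^{\ast}\| = \|A|x|\|^2$; from $\||x|\| = \|A^{-1}(A|x|)\| \leq \|A^{-1}\|\,\|A|x|\|$ one gets $\|A\langle x,x\rangle A^{\ast}\| \geq \|A^{-1}\|^{-2}\|x\|^2$. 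Combining this with the Cauchy--Schwarz bound $\|\langle Sx, x\rangle\| \leq \|Sx\|\,\|x\|$ in the $C^{\ast}$-module and monotonicity of the norm on $\mathcal{A}^{+}$ yields $\|Sx\| \geq \|A^{-1}\|^{-2}\|x\|$. Since $S$ is self-adjoint and bounded below, its kernel is trivial and its range is closed and dense, so $S$ is invertible by the open mapping theorem.
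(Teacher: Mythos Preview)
Your proof is correct and, for self-adjointness and positivity, matches the paper verbatim. The paper's own argument, however, is much terser on the last two points: it does not isolate boundedness at all (it is implicit in the upper inequality $\langle Sx,x\rangle\leq B\langle x,x\rangle B^{\ast}$), and for invertibility it simply records $A\langle x,x\rangle A^{\ast}\leq\langle Sx,x\rangle\leq B\langle x,x\rangle B^{\ast}$ and invokes an external black box (Theorem~2.5 in \cite{Moosavi}). What you do differently is unpack that black box: you convert the sandwich lower bound into the scalar estimate $\|Sx\|\geq\|A^{-1}\|^{-2}\|x\|$ via $\|A\langle x,x\rangle A^{\ast}\|=\|A|x|\|^{2}\geq\|A^{-1}\|^{-2}\|x\|^{2}$ and Cauchy--Schwarz, and then finish with a bounded-below-plus-self-adjoint argument. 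This buys self-containment and makes explicit where invertibility of $A$ is used. One small caution for the final step: in a Hilbert $C^{\ast}$-module, $\ker(S^{\ast})=0$ does not by itself force the range of $S$ to be dense; you should appeal to the closed-range theorem for adjointable operators (e.g.\ Lance \cite{lance}), which gives $\mathcal{H}=\operatorname{ran}(S)\oplus\ker(S^{\ast})$ once $\operatorname{ran}(S)$ is closed, and then surjectivity follows.
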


\begin{proof}
	Let $x, y\in\mathcal{H}$, we have
	\begin{align*}
		\langle Sx, y\rangle&=\Big\langle\int_{\Omega}\langle x, F_{w}\rangle F_{w} d\mu(w), y\Big\rangle\\&=\int_{\Omega}\langle x, F_{w}\rangle\langle F_{w}, y\rangle d\mu(w)\\&=\Big\langle x, \int_{\Omega}\langle y, F_{w}\rangle F_{w}d\mu(w)\Big\rangle\\&=\langle x, Sy\rangle,
	\end{align*}
	so the operator $S$ is self-adjoint.
	
	\noindent
	Let $x\in\mathcal{H}$, by the definition of a $\ast$-continuous frame for $\mathcal{H}$ we have
	\begin{equation*}
		A\langle x,x\rangle A^{\ast}
		\leq\int_{\Omega}\langle x, F_{w}\rangle\langle F_{w}, x\rangle d\mu(w)\leq B\langle x,x\rangle B^{\ast}.
	\end{equation*}
	So
	\begin{equation}\label{25.}
		A\langle x,x\rangle A^{\ast}\leq\langle Sx,x\rangle\leq B\langle x,x\rangle B^{\ast}.
	\end{equation}
	Thus $S$ is positive, and by inequality \eqref{25.} and Theorem $2.5$ in \cite{Moosavi-Nazari-2019-IntJAnalAppl} $S$ is invertible.
\end{proof}
\begin{lemma}
	$ F_{w }$ is a ${\ast}$-continuous Bessel family for Hilbert $C^{\ast}$-module $\mathcal{H}$ with respect to $(\Omega,\mu)$, if and only if there exists constant $\alpha\geq 0$, such that $S\geq \alpha\alpha^{\ast}$, where $S$ is the ${\ast}$-frame operator of $ F_{w }$.
\end{lemma}
\begin{proof}
	The family $F_{w }$ is a ${\ast}$-continuous  Bessel for Hilbert $C^{\ast}$-module $\mathcal{H}$ with bound  $B$ if and only if 
	$$ \int_{\Omega}\langle x,F_{w}x\rangle\langle F_{w}x,x\rangle d\mu(w)\leq B\langle x,x\rangle B^{\ast} , \forall x\in \mathcal{H}.$$
	That is 
	$$ \int_{\Omega}\langle \langle x,F_{w}x\rangle F_{w} x,x\rangle d\mu(w)\leq B\langle x,x\rangle B^{\ast} , \forall x\in \mathcal{H},$$
	so 
	$$\langle \int_{\Omega} \langle x,F_{w}x\rangle F_{w} x,x\rangle  \leq B\langle x,x\rangle B^{\ast} , \forall x\in \mathcal{H}.$$
	so 
	$$\alpha\langle x,x\rangle \alpha^{\ast} \leq\langle Sx,x\rangle \leq B\langle x,x\rangle B^{\ast} , \forall x\in \mathcal{H},$$
	where $S$ is the ${\ast}$-continuous frame operator of $(F_{w})_{w\in \Omega }$.\\
	Therefore, the conclusion holds.
\end{proof}
\begin{theorem}
	Let  $ F_{w }$ be a ${\ast}$-continuous frame for Hilbert $C^{\ast}$-module $\mathcal{H}$ with ${\ast}$-continuous frame operator $S$ and
	lower and upper ${\ast}$-continuous frame bounds $\sqrt{A}$ and $\sqrt{B}$, respectively. Suppose that is a strictly
	positive element in $\mathcal{A}$. Then the sequence $\{\alpha F_{w }:w\in \Omega\}$ is a ${\ast}$-continuous frame for $\mathcal{H}$ with ${\ast}$-continuous frame operator
	${\mid\alpha\mid}^{2}S$
	and lower and upper ${\ast}$-continuous frame bounds $\alpha)\sqrt{A}$ and $\alpha\sqrt{B}$, respectively.
\end{theorem}
\begin{proof}
	For $x\in \mathcal{H}$, we have $$\int_{\Omega}\langle x,\alpha F_{w}\rangle\langle \alpha F_{w}, x\rangle d\mu(w) = \mid\alpha\mid\int_{\Omega}\langle x,F_{w}\rangle\langle F_{w}, x\rangle d\mu(w), \qquad x\in \mathcal{H}.$$
	Therefore $\alpha F_{w}:w\in \Omega$ is a ${\ast}$-continuous frame for $\mathcal{H}$ with lower and upper ${\ast}$-continuous frame bounds $\alpha\sqrt{A}$ and $\alpha\sqrt{B}$, respectively. If $S_{\alpha}$ is ${\ast}$-continuous frame operator then $$S_{\alpha}x=\int_{\Omega}\langle x,\alpha F_{w}\rangle\alpha F_{w} d\mu(w)=\alpha\int_{\Omega}\langle x, F_{w}\rangle F_{w} d\mu(w)=\alpha S_{\alpha}x, \qquad x\in \mathcal{H}.  $$
\end{proof}
\begin{theorem}
	Let  $ F_{w }$ be a ${\ast}$-continuous frame for Hilbert $C^{\ast}$-module $\mathcal{H}$ with ${\ast}$-continuous frame operator $S$ and lower and upper ${\ast}$-continuous frame bounds $ A$ and $B$, respectively.  in the center of $\mathcal{A}$. Suppose that $ f$ is an element in  $\mathcal{H}$ such that$ \langle F,F\rangle $ is an invertible element in the center of $\mathcal{A}$. 
	Then the sequence $\{ \langle F_{w },F\rangle: w\in \Omega \} $ is a ${\ast}$-continuous frame for Hilbert $C^{\ast}$-module $\mathcal{H}$ A with lower and upper ${\ast}$-continuous frame for Hilbert $C^{\ast}$-module $\mathcal{H}$ frame bounds $ A\sqrt{\langle F,F\rangle}$ and $ B\sqrt{\langle F,F\rangle}$, respectively. And its ${\ast}$-continuous operator is $S_{F }a =\langle SF,F\rangle$ for $ a\in \mathcal{A}$.
\end{theorem}
\begin{proof}
	For $a \in\mathcal{A} $, by the definition of ${\ast}$-continuous frame
	\begin{equation} \label{2..1}
		aA\langle x,x\rangle A^{\ast}a^{\ast}
		\leq a\int_{\Omega}\langle x, F_{w}\rangle\langle F_{w}, x\rangle d\mu(w) a^{\ast}\leq aB\langle x,x\rangle B^{\ast}a^{\ast}, \forall x\in\mathcal{H},
	\end{equation}
	and we have $$\int_{\Omega}\langle  x,\langle F_{w},x\rangle \rangle \langle \langle F_{w}, x \rangle,a \rangle d\mu(w)= a\int_{\Omega}\langle x, F_{w}\rangle\langle F_{w}, x\rangle d\mu(w)a^{\ast}.$$
\end{proof}
\begin{theorem}
	Let  $ F_{w }$ be a ${\ast}$-continuous frame for Hilbert $C^{\ast}$-module $\mathcal{H}$ with ${\ast}$-continuous frame operator $S$ and
	lower and upper ${\ast}$-continuous frame bounds $ A$ and $B$, respectively. Then $S$ is positive, invertible and
	adjointable. Also, the following inequality $\lVert A^{-1}\lVert^{-2} \langle x,x\rangle \leq\langle Sx,x\rangle\leq \lVert B\lVert^{-2} \langle x,x\rangle B^{\ast}$ holds, and the reconstruction formula $F= \int_{\Omega}\langle F,S^{-1}F_{w}\rangle F_{w} d\mu(w) , \forall F\in \mathcal{H} $. Moreover $\{F_{w}:w \in \Omega\}$ is a set of
	module generators of $\mathcal{H} $.
\end{theorem}
\begin{proof}
	The definition of ${\ast}$-continuous frames concludes that
	$$\langle x,x\rangle\leq A^{-1}\langle Sx,x\rangle (A^{\ast})^{-1} and \langle Sx,x\rangle\leq B^{-1}\langle Sx,x\rangle (B^{\ast})^{-1} $$	
	and then $$\lVert A^{-1}\lVert^{-2}\lVert x\lVert^{-2}\leq\lVert\langle Sx,x\rangle\lVert\leq \lVert B\lVert^{2}\lVert x\lVert^{2}.$$
	If we take supremum on all $f \in \mathcal{H}$, where $\lVert f\lVert \leq 1$, then $ \lVert A^{-1}\lVert^{-2}\leq\lVert S\lVert\leq \lVert B\lVert^{2}$.
	The reconstruction formula concludes by the invertibility of $ S $ similar to ordinary frames.	
\end{proof}

\begin{theorem}
	Let  $ \{F_{w} w\in\Omega\}$ be a $\ast$-continuous frames for $\mathcal{H}$ with respect to $(\Omega,\mu)$. with bounds $A$, $B$. Let $T \in End_{\mathcal{A}}^{\ast}(\mathcal{H})$ be invertible  then $\{F_{w}T\}_{\omega \in \Omega}$ is a $\ast$-continuous frames. 
\end{theorem}
\begin{proof}
	We have for all $x\in \mathcal{H}$, $Tx\in \mathcal{H}$,
	\begin{align*}
		A\langle Tx,Tx\rangle A^{\ast} \leq \int_{\Omega}\langle Tx,F_{w}Tx\rangle\langle F_{w}Tx,Tx\rangle d\mu(w) &\leq B\langle Tx,Tx\rangle B^{\ast}\\
		&\leq B\|T\|^{2}\langle x,x\rangle B^{\ast}\\
		&\leq (B\|T\|)\langle x,x\rangle (B\|T\|)^{\ast}.
	\end{align*}
	On other hand, $T$ is invertible then, there exist $0\leq m$ such that
	$$m\langle x,x\rangle m^{\ast}\leq \langle Tx,Tx\rangle.  $$
	So,
	\begin{equation*}
		(Am)\langle x,x\rangle (Am)^{\ast}\leq A\langle Tx,Tx\rangle A^{\ast}
	\end{equation*}
	then 
	\begin{equation*}
		(Am)\langle x,x\rangle (Am)^{\ast}\leq \int_{\Omega}TT^{\ast}\langle x,F_{w}Tx\rangle\langle F_{w} Tx,x\rangle d\mu(w) \leq (B\|T\|)\langle x,x\rangle (B\|T\|)^{\ast}
	\end{equation*}
	this show that $\{F_{w}T\}_{\omega \in \Omega}$ is a $\ast$-continuous frames.
\end{proof}
\begin{theorem}
	Let $F$ be a $\ast$-continuous frame for $\mathcal{H}$ with respect to $(\Omega,\mu)$ with the $\ast$-frame operator $S$. Let $V\in End_{\mathcal{A}}^{\ast}(\mathcal{H},\mathcal{K})$ be a surjective operator. Then $VF$ is a $\ast$-continuous frame for $\mathcal{K}$ with the $\ast$-frame operator $VSV^{\ast}$.
\end{theorem}

\begin{proof}
	The mapping $VF: \Omega \to \mathcal{K}$ is measurable.
	
	\noindent
	Therefore,
	\begin{equation*}
		A\langle V^{\ast}x,V^{\ast}x\rangle A^{\ast}
		\leq\int_{\Omega}\langle V^{\ast}x, F_{w}\rangle\langle F_{w}, V^{\ast}x\rangle d\mu(w)\leq B\langle V^{\ast}x,V^{\ast}x\rangle B^{\ast}, \forall x\in\mathcal{K}.
	\end{equation*}
	So
	\begin{equation*}
		A\|(VV^{\ast})^{-1}\|^{-1}\langle x,x\rangle A^{\ast}
		\leq\int_{\Omega}\langle x, VF_{w}\rangle\langle VF_{w}, x\rangle d\mu(w)\leq B\|V\|^{2}\langle x,x\rangle B^{\ast}, \forall x\in\mathcal{K}.
	\end{equation*}	
	Then $VF$ is a $\ast$-continuous frame for $\mathcal{K}$.
	
	\noindent
	Moreover,
	$$VSV^{\ast}x=V\int_{\Omega}\langle V^{\ast}x, F_{w}\rangle F_{w} d\mu(w)=\int_{\Omega}\langle x, VF_{w}\rangle VF_{w} d\mu(w), \forall x\in\mathcal{K}.$$
	Then $VSV^{\ast}$ is the $\ast$-frame operator of the $\ast$-continuous frame $VF$.	
\end{proof}
\begin{corollary}
	Let $F$ be a $\ast$-continuous frame for $\mathcal{H}$ with respect to $(\Omega,\mu)$ with the frame operator $S$. Then $S^{-\frac{1}{2}}F$ is a Parseval $\ast$-continuous frame for $\mathcal{H}$.
\end{corollary}
\begin{proof}
	Result the next theorem by taking $V=S^{-\frac{1}{2}}$.
\end{proof}
In the following we study $\ast$-continuous  frames in two Hilbert $C^{\ast}$-modules with different $C^{\ast}$-algebras.
\begin{theorem}
	Let $ F_{w} \in \mathcal{H}$ be a $\ast$-continuous frame for $ \mathcal{H}$ with lower and upper $\ast$-frame bounds
	$A$ and $ B$, respectively. The $\ast$-frame transform or pre -$\ast$- frame operator $T_{F}: \mathcal{H} \to L^{2}(\Omega, \mathcal{A})$
	defined by $\{ T(F_{w}) = \langle x,F_{w}\rangle; w \in \Omega\}$ is an injective and closed range adjointable $ A $-module map
	and $\lVert T\lVert \leq \lVert B\lVert$.   The adjoint operator $ T^{\ast}$ is surjective and it is given by $T^{\ast}(e_{w})=F_{w}$  for $ w \in \Omega$  where $\{e_{w}: w \in \Omega\}$ is the standard basis for $ L^{2}(\Omega, \mathcal{A})$.
\end{theorem}
\begin{proof}
	By the definition of norm in $ L^{2}(\Omega, \mathcal{A})$,
	$$\lVert TF_{w}\lVert^{2} =\int_{\Omega}\langle x,F_{w}x\rangle\langle F_{w}x,x\rangle d\mu(w)\leq B\langle x,x\rangle B^{\ast} , \forall x\in \mathcal{H}.$$
	This inequality implies that $ T$ is well defined and $\lVert T\lVert \leq \lVert B\lVert$. Clearly, $T$ is a linear A-
	module map. We now show that $R_{T}$ is closed. Let $\{ TF_{n}: n \in\mathbb{N} \} $  be a sequence in $R_{T}$ such that
	$ TF_{n}\rightarrow g $ as $ n\rightarrow \infty $ \\  we have $$A\langle F_{n}-F_{m},F_{n}-F_{m}\rangle A^{\ast}\leq \lVert F_{n}-F_{m}\lVert^{2}.$$   
	Since, $\{ TF_{n}: n \in\mathbb{N} \} $
	is a cauchy sequence in $ L^{2}(\Omega, \mathcal{A})$,
	\\ $\lVert A\langle F_{n}-F_{m},F_{n}-F_{m}\rangle A^{\ast}\lVert \rightarrow 0$.
	Note as $ n,m\rightarrow \infty $,
	that for $n,m \in \mathbb{N}$,
	$$\lVert\langle F_{n}-F_{m},F_{n}-F_{m}\rangle\lVert\leq\lVert AA^{-1}\langle F_{n}-F_{m},F_{n}-F_{m}\rangle A^{\ast}(A^{\ast})^{-1}\lVert \leq\lVert A^{-1}\lVert^{2}\lVert A( F_{n}-F_{m})A^{\ast}\lVert^{2}.$$ 
	Therefore the sequence $\{ F_{n}: n \in\mathbb{N} \} $ is a cauchy sequence in $\mathcal{H}$  and hence there exists $ F\in\mathcal{H} $ 
	such that $ TF_{n}\rightarrow g $ as $ n\rightarrow \infty $.\\ Again by the definition of  $\ast$-continuous frame for $ \mathcal{H}$, we obtain $$\lVert T( F_{n}-F) \lVert \leq\lVert B\lVert^{2}\lVert ( F_{n}-F)\lVert$$  
	Thus $\lVert T( F_{n}-F)\rightarrow 0$  as $ n\rightarrow \infty$  implies that $TF=g$. It concludes
	that $R_{T}$  is closed. In order to show that $ T$ is injective, suppose that $F \in\mathcal{H}$  and $TF=0$.  
	$$\lVert\langle F,F\rangle\lVert\leq\lVert AA^{-1}\langle F,F\rangle A^{\ast}(A^{\ast})^{-1}\lVert \leq\lVert A^{-1}\lVert^{2}\lVert TF\lVert^{2}.$$ 
	Thus $F=0$  and $T$ is injective. To determine the adjoint operator $T^{\ast}$, consider the equalities
	$$ \langle TF,e_{k}\rangle =\langle \{ \langle F,F_{w}\rangle\}_{w},e_{k}\rangle =\langle F,F_{k}\rangle$$
	for all $k\in \Omega$  and  $ F\in\mathcal{H} $.\\
	Now, given  $ F\in\mathcal{H} $  and
	$\{a_{w}\in L^{2}(\Omega, \mathcal{A}):w \in \Omega\} $, we have
	\begin{align*}
		\langle a_{w}, TF\rangle&=\int_{\Omega} a_{w}\langle F , F_{w}\rangle^{\ast}  d\mu(w)\\&=\langle \int_{\Omega}a_{w}F_{w}d\mu(w), F\rangle.  
	\end{align*}
	This implies that $\int_{\Omega}a_{w}F_{w}d\mu(w)$
	converges in $ \mathcal{H}$ and 
	for every $\{a_{w}\in L^{2}(\Omega, \mathcal{A}):w \in \Omega\} $. 
	By injectivity of $T$, the operator $T^{\ast}$  has closed range and $R_{T}=\mathcal{H}$, which completes the proof. 
\end{proof}
\begin{theorem}
	Let $(\mathcal{H},\mathcal{A}, \langle.  ,.\rangle_{\mathcal{A}})$ and $(\mathcal{H},\mathcal{B}, \langle.  ,.\rangle_{\mathcal{B}})$ be two Hilbert $C^{\ast}$-modules,  $\phi :\mathcal{A} \rightarrow \mathcal{B}$ be a $\ast$-homomorphism and $\theta$ be an adjointable map on $\mathcal{H}$ such that $\langle \theta x,\theta y\rangle_{\mathcal{B}}=\phi(\langle x,y\rangle_{\mathcal{A}})$ for all $x, y\in \mathcal{H}$. Also, suppose that $\{F_{w}\}_{w\in\Omega}$ is a ${\ast}$-continuous frame for $(\mathcal{H}, \mathcal{A},\langle.  ,.\rangle_{\mathcal{A}})$ with  $\ast$-continuous frame operator $S_{\mathcal{A}}$ and lower and upper bounds $A$, $B$ respectively. If $\theta$ is surjective and $\theta F_{w}=F_{w}\theta$ for all $w\in\Omega$, then $\{F_{w}\}_{w\in\Omega}$ is a $\ast$-continuous  frame for $(\mathcal{H},\mathcal{B}, \langle.  ,.\rangle_{\mathcal{B}})$ with $\ast$-continuous frame operator $S_{\mathcal{B}}$ and lower and upper bounds $\phi(A)$ and $\phi(B)$, respectively, and $\langle S_{\mathcal{B}}\theta x,\theta y\rangle_{\mathcal{B}} =\phi(\langle S_{\mathcal{A}}x, y\rangle_{\mathcal{A}}).$ 
\end{theorem}

\begin{proof}
	Let $y\in \mathcal{H}$. Since $\theta$ is surjective,  there exists $x\in \mathcal{H}$ such that $\theta x=y$, and $\{F_{w}\}_{w\in\Omega}$ is a ${\ast}$-continuous frame for $\mathcal{H}$ we have 
	\begin{equation*}
		A\langle x,x\rangle_{\mathcal{A}} A^{\ast}\leq \int_{\Omega}\langle x,F_{w}x\rangle_{\mathcal{A}}\langle F_{w}x,x\rangle_{\mathcal{A}} d\mu(w)\leq B\langle x,x\rangle_{\mathcal{A}} B^{\ast}.
	\end{equation*}
	Thus 
	\begin{equation*}
		\phi(	A\langle x,x\rangle_{\mathcal{A}} A^{\ast})\leq \phi\big( \int_{\Omega}\langle x,F_{w}x\rangle_{\mathcal{A}}\langle F_{w}x,x\rangle_{\mathcal{A}} d\mu(w)\big)\leq \phi(B\langle x,x\rangle_{\mathcal{A}} B^{\ast}).
	\end{equation*}
	By definition of $\ast$-homomorphism, we have 
	\begin{equation*}
		\phi(A)\phi(\langle x,x\rangle_{\mathcal{A}} )\phi(A^{\ast})\leq \int_{\Omega}\phi\big(\langle x,F_{w}x\rangle_{\mathcal{A}}\langle F_{w}x,x\rangle_{\mathcal{A}}\big)d\mu(w)\leq\phi(B)\phi(\langle x,x\rangle_{\mathcal{A}})\phi(B^{\ast}).
	\end{equation*}
	By the relation betwen $\theta$ and $\phi$, we get 
	\begin{equation*}
		\phi(A)\langle y,y\rangle_{\mathcal{B}}\phi(A)^{\ast}\leq \int_{\Omega}\langle y,F_{w}y\rangle_{\mathcal{B}}\langle F_{w}y,y\rangle_{\mathcal{B}}d\mu(w)\leq  \phi(B)\langle y,y\rangle_{\mathcal{B}}\phi(B)^{\ast}.
	\end{equation*}
	On the other hand, we have
	\begin{align*}
		\phi(\langle S_{\mathcal{A}}x, y\rangle_{\mathcal{A}})&=\phi(\langle \int_{\Omega}\langle x,F_{w}x \rangle F_{w}xd\mu(w),y\rangle_{\mathcal{A}})\\&=\int_{\Omega}\phi(\langle x,F_{w}x\rangle_{\mathcal{A}}\langle F_{w}x,y\rangle_{\mathcal{A}})d\mu(w)\\&=\int_{\Omega}\langle  \theta x,F_{w}\theta x\rangle_{\mathcal{B}}\langle  F_{w}\theta x, \theta y\rangle_{\mathcal{B}}d\mu(w)\\&=\langle \int_{\Omega}\langle \theta x,F_{w}\theta x \rangle F_{w}\theta x,\theta y\rangle_{\mathcal{B}} d\mu(w), \theta \\&=\langle S_{\mathcal{B}}\theta x,\theta y\rangle_{\mathcal{B}}.
	\end{align*}
	This completes the proof.	
\end{proof}
\begin{theorem}	\label{Thorem3-1}
	Let $\left(\mathcal{H}, \mathcal{A},\langle\cdot, \cdot\rangle_{\mathcal{A}}\right)$ and $\left(\mathcal{H}, \mathcal{B},\langle\cdot, \cdot\rangle_{\mathcal{B}}\right)$ be two Hilbert $C^{*}$-modules and let $\varphi$ : $\mathcal{A} \longrightarrow \mathcal{B}$ be a $*$-homomorphism and $\theta$ be a map on $\mathcal{H}$ such that $\langle\theta x, \theta y\rangle_{\mathcal{B}}=\varphi\left(\langle x, y\rangle_{\mathcal{A}}\right)$ for all $x, y \in \mathcal{H}$. Also, suppose that $F_{w} $ is a ${\ast}$-continuous frame for $\left(\mathcal{H}, \mathcal{A},\langle\cdot, \cdot\rangle_{\mathcal{A}}\right)$ with ${\ast}$-continuous frame operator $S_{\mathcal{A}}$ and lower and upper $\ast$-continuous frame bounds $\alpha_{1}, \alpha_{2}$, respectively. If $\theta$ is surjective, then $\left\{\theta F_{w}\right\}_{w  \in  \Omega}$ is a ${\ast}$-continuous frame for $\left(\mathcal{H}, \mathcal{B},\langle\cdot, \cdot\rangle_{\mathcal{B}}\right)$ with ${\ast}$-continuous frame operator $S_{\mathcal{B}}$ and lower and upper continuous frame bounds $\varphi\left(\alpha_{1}\right), \varphi\left(\alpha_{2}\right)$, respectively, and
	\begin{equation}\label{eq-3.2}
		\left\langle S_{\mathcal{B}} \theta x, \theta y\right\rangle_{\mathcal{B}}=\varphi\left(\left\langle S_{\mathcal{A}} x, y\right\rangle_{\mathcal{A}}\right), \quad \forall x \in \mathcal{H}.
	\end{equation}
	Moreover, the map $\theta$ is surjective if the following conditions are valid.
	\begin{enumerate}
		\item[(1)] $\varphi$ is surjective;
		\item[(2)] $\left\{\theta F_{w}\right\}_{w \in \Omega }$ is a ${\ast}$-continuous frame for $\mathcal{H}$;
		\item[(3)] $\theta(a x)=\varphi(a) \theta x$, for all $a \in \mathcal{A}, x \in \mathcal{H}$.
	\end{enumerate}
\end{theorem}
\begin{proof}
	Assume that $\theta$ is surjective. Using Proposition \ref{Proposition-0-1}, we have that
	$$
	\begin{aligned}
		\int_{\Omega}\left\langle\theta x, \theta F_{w}\right\rangle_{\mathcal{B}}\left\langle\theta F_{w}, \theta x\right\rangle_{\mathcal{B}}d\mu(w) &=\int_{\Omega} \varphi\left(\left\langle x, F_{w}\right\rangle_{\mathcal{A}}\left\langle F_{w}, x\right\rangle_{\mathcal{A}}\right)d\mu(w) \\
		& \leq \varphi\left(\alpha_{2}\langle x, x\rangle_{\mathcal{A}} \alpha_{2}^{*}\right)=\varphi\left(\alpha_{2}\right)\langle\theta x, \theta x\rangle_{\mathcal{B}} \varphi\left(\alpha_{2}\right)^{*}, \quad \forall x \in \mathcal{H},
	\end{aligned}
	$$
	and $\varphi\left(\alpha_{2}\right)$ is a strictly nonzero element of $\mathcal{B}$. Then the sequence $\left\{\theta F_{w}\right\}_{w \in \Omega}$ has upper ${\ast}$-continuous frame bound $\varphi\left(\alpha_{2}\right)$. Similarly, $\varphi\left(\alpha_{1}\right)$ is a lower ${\ast}$-continuous frame bound for $\left\{\theta F_{w}\right\}_{w \in \Omega}$ and then $\left\{\theta F_{w}\right\}_{w \in \Omega}$ is a ${\ast}$-continuous frame for $\left(\mathcal{H}, \mathcal{B},\langle\cdot, \cdot\rangle_{\mathcal{B}}\right)$. The equation \eqref{eq-3.2} follows from
	$$
	\int_{\Omega}\left\langle\theta x, \theta F_{w}\right\rangle_{\mathcal{B}}\left\langle\theta F_{w}, \theta y\right\rangle_{\mathcal{B}}d\mu(w)=\varphi\left(\int_{\Omega}\left\langle x, F_{w}\right\rangle_{\mathcal{A}}\left\langle F_{w}, y\right\rangle_{\mathcal{A}}\right)d\mu(w), \quad \forall x, y \in \mathcal{H}.  
	$$
	For the rest of the proof, let $\varphi$ be surjective and $\theta(a x)=\varphi(a) \theta x$, for all $a \in \mathcal{A}$ and $x \in \mathcal{H}$. By applying the reconstruction formula for ${\ast}$-continuous frame $\left\{\theta F_{w}\right\}_{w \in \omega}$, we have $y=\int_{\Omega}\left\langle y, S_{\mathcal{B}}^{-1} \theta F_{w}\right\rangle_{\mathcal{B}} \theta F_{w}d\mu(w)$ for $y \in \mathcal{H}$. Since $\varphi$ is surjective, $\varphi\left(a_{w}\right)=\left\langle y, S_{\mathcal{B}}^{-1} \theta F_{w}\right\rangle_{\mathcal{B}}$ for some $ a_{w} \in \mathcal{A}$ and for all $w \in \omega$. Observe that $y=\int_{\Omega} \varphi\left(a_{w}\right) \theta F_{w}d\mu(w)=\int_{\Omega} \theta\left(a_{w} F_{w}\right)d\mu(w)=\theta\left(\int_{\Omega} a_{w} F_{w}d\mu(w)\right)$. This shows that $\theta$ is surjective and the proof is complete.
\end{proof}
\begin{corollary}
	Let $\mathcal{A}, \mathcal{B}, \mathcal{H},\left\{F_{w}\right\}_{w \in \Omega}$ and $\varphi$ be as in Theorem \ref{Thorem3-1}. Also, let $\theta$ be a $\mathcal{B}$-module map on $\mathcal{H}$ such that $\varphi\left(\langle x, y\rangle_{\mathcal{A}}\right)=\langle\theta x, \theta y\rangle_{\mathcal{B}}$. Then $\theta$ is surjective if and only if $\left\{\theta F_{w}\right\}_{w \in \Omega}$ is a $ *$- continuous frame for $\left(\mathcal{H}, \mathcal{B},\langle\cdot, \cdot\rangle_{\mathcal{B}}\right)$.
\end{corollary}
\begin{proof}
	Proof of the 'if part' is similar to the proof of Theorem \ref{Thorem3-1}. For the converse, since $\theta$ is $\mathcal{B}$-module map, $g=\int_{\Omega}\left\langle g, S_{\mathcal{B}}^{-1} \theta F_{w}\right\rangle_{\mathcal{B}} \theta F_{w}d\mu(w)=\theta\left(\int_{\Omega}\left\langle g, S_{\mathcal{B}}^{-1} \theta F_{w}\right\rangle_{\mathcal{B}} F_{w}d\mu(w)\right)$, for $g \in \mathcal{H}$, and it completes the proof.
\end{proof}
\begin{proposition}	
	Let $\mathcal{A}, \mathcal{B}$ and $\mathcal{H}$ be the same in Theorem \ref{Thorem3-1}. If $\varphi$ is a $*$-isomorphism and $\theta$ is surjective map on $\mathcal{H}$ such that $\varphi\left(\langle f, g\rangle_{\mathcal{A}}\right)=\langle\theta f, \theta g\rangle_{\mathcal{B}}$, then the set of all ${\ast}$-continuous frames for $\left(\mathcal{H}, \mathcal{B},\langle\cdot, \cdot\rangle_{\mathcal{B}}\right)$ is precisely $\left\{\theta F_{w}\right\}_{w \in \Omega}$ where $\left\{F_{w}\right\}_{w \in \Omega}$ is a ${\ast}$-continuous frame for $\left(\mathcal{H}, \mathcal{A},\langle\cdot, \cdot\rangle_{\mathcal{A}}\right)$.
\end{proposition}
\begin{proof}
	Theorem \ref{Thorem3-1} concludes that the sequence $\left\{\theta F_{w}\right\}_{w \in \Omega}$ is a ${\ast}$-continuous frame for $\left(\mathcal{H}, \mathcal{B},\langle\cdot, \cdot\rangle_{\mathcal{B}}\right)$ if $\left\{F_{w}\right\}_{w \in \Omega}$ is a ${\ast}$-continuous frame for $\left(\mathcal{H}, \mathcal{A},\langle\cdot, \cdot\rangle_{\mathcal{A}}\right)$. Now, assume that $\left\{g_{w}\right\}_{w \in \Omega}$ is a ${\ast}$-continuous frame for $\left(\mathcal{H}, \mathcal{B},\langle\cdot, \cdot\rangle_{\mathcal{B}}\right)$ with lower and upper ${\ast}$-continuous frame bounds $\beta_{1}$ and $\beta_{2}$. By the properties of $\theta$, and Proposition \ref{Proposition-0-1}, there exist the sequence $\left\{F_{w}\right\}_{w \in \Omega}$ in $\mathcal{H}$ and two elements $\alpha_{1}, \alpha_{2}$ in $\mathcal{A}$ such that $g_{w}=\theta F_{w}$ for $w \in \Omega, \varphi\left(\alpha_{1}\right)=\beta_{1}$, and $\varphi\left(\alpha_{2}\right)=\beta_{2}$. The elements $\alpha_{1}$ and $\alpha_{2}$ are strictly nonzero by Proposition \ref{Proposition-0-1}. Using the definition of the ${\ast}$-continuous frame $\left\{g_{w}\right\}_{w \in \Omega}$, we have
	$$
	\begin{aligned}
		\varphi\left(\int_{\Omega}\left\langle f, F_{w}\right\rangle_{\mathcal{A}}\left\langle F_{w}, f\right\rangle_{\mathcal{A}}d\mu(w)\right) &=\int_{\Omega}\left\langle\theta f, \theta F_{w}\right\rangle_{\mathcal{B}}\left\langle\theta F_{w}, \theta f\right\rangle_{\mathcal{B}}d\mu(w) \\
		& \leq \beta_{2}\langle\theta f, \theta f\rangle_{\mathcal{B}} \beta_{2}^{*}=\varphi\left(\alpha_{2}\langle f, f\rangle_{\mathcal{A}} \alpha_{2}^{*}\right), \quad \forall f \in \mathcal{H}.  
	\end{aligned}
	$$
	We apply Proposition \ref{Proposition-0-1} again, $\int_{\Omega}\left\langle f, F_{w}\right\rangle_{\mathcal{A}}\left\langle F_{w}, f\right\rangle_{\mathcal{A}}d\mu(w) \leq \alpha_{2}\langle f, f\rangle_{\mathcal{A}} \alpha_{2}^{*}$, for $f \in \mathcal{H}$. Similarly, $\alpha_{1}$ is a lower $*$-frame bound for $\left\{F_{w}\right\}_{w \in \Omega}$. This shows that every $*$-frame in $\left(\mathcal{H}, \mathcal{B},\langle\cdot, \cdot\rangle_{\mathcal{B}}\right)$ is obtained by the action of $\theta$ on a $*$-continuous frame in $\left(\mathcal{H}, \mathcal{A},\langle\cdot, \cdot\rangle_{\mathcal{A}}\right)$.
\end{proof}
\begin{proposition}
	Let $\varphi: \mathcal{A} \longrightarrow \mathcal{B}$ be a $*$isomorphism. The set of all of ${\ast}$-continuous frames for the Hilbert $\mathcal{B}$-module $\mathcal{B}$ is precisely $\left\{\varphi\left(a_{w}\right)\right\}_{w \in \Omega}$, where $\left\{a_{w}\right\}_{w \in \Omega}$ is a ${\ast}$-continuous frame for the Hilbert $\mathcal{A}$-module $\mathcal{A}$. Moreover, if $S_{\mathcal{A}}$ and $S_{\mathcal{B}}$ are ${\ast}$-continuous frame operators for $\left\{a_{w}\right\}_{w \in \Omega}$ and $\left\{\varphi\left(a_{w}\right)\right\}_{w \in \Omega}$, respectively, then $\varphi o S_{\mathcal{A}}=S_{\mathcal{B}} \circ \varphi$.
\end{proposition}
\begin{proof}
	For a sequence $\left\{a_{w}\right\}_{w \in \Omega}$ in $\mathcal{A}$, we have
	\begin{equation}\label{eq3-3}
		\int_{\Omega}\left\langle\varphi(a), \varphi\left(a_{w}\right)\right\rangle_{\mathcal{B}}\left\langle\varphi\left(a_{w}\right), \varphi(a)\right\rangle_{\mathcal{B}}d\mu(w)=\varphi\left(\int_{\Omega}\left\langle a, a_{w}\right\rangle_{\mathcal{A}}\left\langle a_{w}, a\right\rangle_{\mathcal{A}}d\mu(w)\right), \quad \forall a \in \mathcal{A}.
	\end{equation}
	Proposition \ref{Proposition-0-1} and the above equalities imply that $\left\{\varphi\left(a_{w}\right)\right\}_{w \in \Omega}$ is a ${\ast}$-continuous frame for $\mathcal{B}$ if $\left\{a_{w}\right\}_{w \in \Omega}$ is a ${\ast}$-continuous frame for $\mathcal{A}$. Now, suppose $\left\{b_{w}\right\}_{w \in \Omega}$ is a ${\ast}$-continuous frame for $\mathcal{B}$. Since $\varphi$ is surjective, there exists a sequence $\left\{a_{w}\right\}_{w \in \Omega}$ in $\mathcal{A}$ such that $b_{w}=\varphi\left(a_{w}\right)$ for $w \in \Omega$. Also, applying Proposition \ref{Proposition-0-1}  and \eqref{eq3-3}, we obtain that $\left\{a_{w}\right\}_{w \in \Omega}$ is a ${\ast}$-continuous frame for $\mathcal{A}$. For the rest of the proof, let $S_{\mathcal{A}}$ and $S_{\mathcal{B}}$ be $*$-frame operators for $\left\{a_{w}\right\}_{w \in \Omega}$ and $\left\{\varphi\left(a_{w}\right)\right\}_{w \in \Omega}$, respectively. Then $\varphi S_{\mathcal{A}}(a)=$ $\varphi\left(\int_{\Omega} a a_{w}^{*} a_{w}\right)=S_{\mathcal{B}} \varphi(a)$, for all $a \in \mathcal{A}$, and $\varphi o S_{\mathcal{A}}=S_{\mathcal{B}} o \varphi$.
\end{proof}
\begin{theorem}
	Let $\{F_{w}: w\in\Omega\}$ be a $\ast$-continuous frame for $\mathcal{H}$ with lower and upper bounds $A$ and $B$, respectively. Let $\theta\in End_{\mathcal{A}}^{\ast}(\mathcal{H})$ be injective and have a closed range. Then $\{\theta F_{w}\}_{w\in \Omega}$ is a $\ast$-continuous frame for $\mathcal{H}$.
\end{theorem}

\begin{proof}
	$\{F_{w}: w\in\Omega\}$ be a $\ast$-continuous frame for $\mathcal{H}$.\\	
	We have 
	\begin{equation*}
		A\langle x,x\rangle A^{\ast}\leq \int_{\Omega}\langle x,F_{w}x\rangle\langle F_{w} x,x\rangle d\mu(w)\leq B\langle x,x\rangle B^{\ast},\qquad\forall x\in U.
	\end{equation*}
	Then for each $x\in \mathcal{H}$ 
	\begin{equation}\label{eq13}
		\int_{\Omega}\langle \theta x,\theta F_{w}x\rangle\langle \theta F_{w} x,\theta x\rangle d\mu(w)\leq B\langle\theta x,\theta x\rangle B^{\ast}\leq\|\theta \|^{2}B\langle x,x\rangle B^{\ast}\leq (\|\theta \|B)\langle x,x\rangle (\|\theta\|B)^{\ast}.
	\end{equation}
	By Lemma \ref{3}, we have for each $x\in \mathcal{H}$
	\begin{equation*}
		\|(\theta^{\ast}\theta)^{-1}\|^{-1}\langle x,F_{w}x\rangle \langle F_{w}x,x\rangle\leq \langle \theta x, \theta F_{w}x\rangle\langle \theta F_{w} x, \theta x\rangle
	\end{equation*}
	and $\|\theta^{-1}\|^{-2}\leq \|(\theta^{\ast}\theta)^{-1}\|^{-1}$. Thus  
	\begin{equation}\label{eq14}
		\|\theta^{-1}\|^{-1}A\langle x,x\rangle (\|\theta^{-1}\|^{-1}A)^{\ast}\leq \int_{\Omega}\langle \theta x,\theta F_{w}x\rangle \langle \theta F_{w} x,\theta x\rangle d\mu(w).
	\end{equation}
	From \eqref{eq13} and \eqref{eq14}, we have for each $x\in \mathcal{H}$
	\begin{eqnarray*}
		\|\theta^{-1}\|^{-1}A\langle x,x\rangle (\|\theta^{-1}\|^{-1}A)^{\ast} & \leq & \int_{\Omega}\langle \theta x,\theta  F_{w}x\rangle\langle \theta F_{w} x,\theta  x\rangle d\mu(w)\\ & \leq &  \|\theta \|^{2}B\langle x,x\rangle B^{\ast}\\ &\leq & (\|\theta \|B)\langle x,x\rangle (\|\theta\|B)^{\ast}.
	\end{eqnarray*}
	We conclude that $\{\theta F_{w}\}_{w\in \Omega}$ is a $\ast$-continuous frame for $U$.
\end{proof}

\begin{theorem}\label{18}
	Let $\{F_{w}: w\in\Omega\}$ be a $\ast$-continuous frame for $\mathcal{H}$ with lower and upper bounds $A$ and $B$, respectively, and with $\ast$-continuous frame operator $S$. Let $\theta\in End_{\mathcal{A}}^{\ast}(\mathcal{H})$ be injective and have a closed range. Then $\{F_{w}\theta: w\in\Omega\}$ is a $\ast$-continuous frame for $\mathcal{H}$.
\end{theorem}
\begin{proof}
	We have 
	\begin{equation}\label{eq11}
		A\langle \theta x,\theta x\rangle A^{\ast}\leq\int_{\Omega}\langle\theta x,F_{w}\theta x\rangle\langle F_{w}\theta x,\theta x\rangle d\mu(w)\leq B\langle \theta x,\theta x\rangle B^{\ast}, \forall x\in U.
	\end{equation}
	Using Lemma \ref{3}, we have $\|(\theta^{\ast}\theta)^{-1}\|^{-1}\langle x,x\rangle\leq\langle \theta x,\theta x\rangle$, $\forall x\in U$. That is, $\|\theta^{-1}\|^{-2}\leq\|(\theta^{\ast}\theta)^{-1}\|^{-1}$. This implies
	\begin{equation}\label{eq22} 
		\|\theta^{-1}\|^{-1}A\langle x,x\rangle(\|\theta^{-1}\|^{-1}A)^{\ast}\leq A\langle \theta x,\theta x\rangle A^{\ast}, \forall x\in U.
	\end{equation}
	And we know that $\langle \theta x,\theta x\rangle\leq\|\theta\|^{2}\langle x,x\rangle$, $\forall x\in U$. This implies that
	\begin{equation}\label{eq33}
		B\langle \theta x,\theta x\rangle B^{\ast}\leq\|\theta \|B\langle x,x\rangle(\|\theta\|B)^{\ast}, \forall x\in U.
	\end{equation}
	Using \eqref{eq11}, \eqref{eq22} and  \eqref{eq33},  we have
	\begin{equation}
		\|\theta^{-1}\|^{-1}A\langle x,x\rangle(\|\theta^{-1}\|^{-1}A)^{\ast}\leq\int_{\Omega}\langle\theta x,F_{w}\theta x\rangle\langle F_{w}\theta x,\theta x\rangle d\mu(w)\leq B\|\theta \|\langle x,x\rangle(B\|\theta \|)^{\ast}, \forall x\in U.
	\end{equation}
	So $\{F_{w}\theta: w\in\Omega\}$ is a $\ast$-continuous frame for $\mathcal{H}$.	
\end{proof}
\begin{corollary}\label{01}
	Let $F_{w}$ be a $\ast$-continuous frame for $\mathcal{H}$, with $\ast$-continuous frame operator $S$. Then $F_{w}S^{-1}$ is a $\ast$-continuous frame for $\mathcal{H}$.
\end{corollary}
\begin{proof}
	The proof follows from Theorem \ref{18}  by taking $\theta=S^{-1}$.
\end{proof}
\section{The stability problem}
The question of stability plays an important role in various
fields of applied mathematics. The classical theorem of
the stability of a base is due to Paley and Wiener. It is based on the fact that a bounded operator $ T $ on a Banach space is invertible if $\|I-T\|<1$.
\begin{theorem} [\cite{Wiener-Paley-1934}]
	Let $\{f_{i}\}_{i\in\mathbb{N}}$ be a basis of a Banach space $X$, and $\{g_{i}\}_{i\in\mathbb{N}}$ a sequence of vectors in $X$. If there exists a constant $\lambda\in[0,1)$ such that
	\begin{equation*}
		\Big\|\sum_{i\in\mathbb{N}}c_{i}(f_{i}-g_{i})\Big\|\leq\lambda\Big\|\sum_{i\in\mathbb{N}}c_{i}f_{i}\Big\|
	\end{equation*}
	for all finite sequence  $\{c_{i}\}_{i\in\mathbb{N}}$ of scalars, then $\{g_{i}\}_{i\in\mathbb{N}}$ is also a basis for $X$.
\end{theorem}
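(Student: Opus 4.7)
The plan is to realize the hypothesis as a norm estimate for a concretely defined linear operator on the dense subspace spanned by $\{f_{i}\}$, extend that operator by continuity, and then invoke a Neumann series argument to conclude that a perturbation of the identity is invertible; transporting the basis $\{f_{i}\}$ by this invertible operator will give the basis property for $\{g_{i}\}$.

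First I would define $U$ on the linear span of $\{f_{i}\}$ by $U\bigl(\sum c_{i} f_{i}\bigr) = \sum c_{i}(f_{i}-g_{i})$, taking finite sums only. The displayed inequality in the statement reads precisely as $\|U y\| \le \lambda \|y\|$ for every $y$ in this span, so $U$ is well-defined (the zero element has zero image), linear, and bounded by $\lambda$ on a dense subspace of $X$. I would then extend $U$ by continuity to a bounded linear operator on all of $X$ with $\|U\| \le \lambda < 1$.

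Next I would set $T := I - U \in B(X)$. On a finite sum one computes $T\bigl(\sum c_{i} f_{i}\bigr) = \sum c_{i} f_{i} - \sum c_{i}(f_{i}-g_{i}) = \sum c_{i} g_{i}$, and in particular $T f_{i} = g_{i}$ for every $i$. Since $\|I - T\| = \|U\| \le \lambda < 1$, the Neumann series $\sum_{n \ge 0} U^{n}$ converges in $B(X)$ and provides a bounded inverse $T^{-1}$. Hence $T$ is a bounded invertible operator on $X$ with $T f_{i} = g_{i}$.

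Finally I would transfer the basis property along $T$. Given $x \in X$, applying the Schauder expansion of $T^{-1} x = \sum c_{i} f_{i}$ and the continuity of $T$ yields $x = \sum c_{i} g_{i}$, proving existence of an expansion. For uniqueness, if $\sum c_{i} g_{i} = 0$ then applying $T^{-1}$ gives $\sum c_{i} f_{i} = 0$, which forces all $c_{i} = 0$ because $\{f_{i}\}$ is a basis. I expect the only subtle point to be the justification that $U$ really does extend to a bounded operator on all of $X$: this requires that finite linear combinations of the $f_{i}$ be dense in $X$ (which follows from $\{f_{i}\}$ being a basis) and that the hypothesized inequality, which only controls $U$ on such combinations, gives uniform continuity and hence a unique continuous extension.
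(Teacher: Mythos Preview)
Your argument is correct and is the standard Neumann-series proof of the Paley--Wiener stability theorem. Note, however, that the paper does not actually prove this statement: it is quoted as a classical result (with a citation to Paley and Wiener) solely to motivate the stability theorems that follow, so there is no proof in the paper to compare against.
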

\begin{theorem}
	Let $F$ be a continuous frame for $\mathcal{H}$ with respect to $(\Omega, \mu)$. Let $G: \Omega \to \mathcal{H}$ be a mapping such that  for all $x\in\mathcal{H}, w \to \langle x, G_{w}\rangle $ is a measurable function on $\Omega$. Then the following are equivalent:
	\begin{itemize}
		\item [1.] $G$ is a continuous frame for $\mathcal{H}$ with respect to $(\Omega, \mu)$.
		\item[2.] There exists a constant $M>0$, such that for any $x\in \mathcal{H}$, one has
		\begin{multline}\label{3..1}
			\bigg\|\int_{\Omega}\langle x, F_{w}-G_{w}\rangle\langle F_{w}-G_{w}, x\rangle d\mu(w)\bigg\|\\
			\leq M\min\bigg(\bigg\|\int_{\Omega}\langle x, F_{w}\rangle\langle F_{w}, x\rangle d\mu(w)\bigg\|,\bigg\|\int_{\Omega}\langle x, G_{w}\rangle\langle G_{w}, x\rangle d\mu(w)\bigg\|\bigg).
		\end{multline}
	\end{itemize}
\end{theorem}
\begin{proof}
	$1.\Rightarrow2.$ Suppose that $G$ is a Continuous frame for $\mathcal{H}$ with lower and upper bounds $C$ and $D$, respectively. Then for any $x\in \mathcal{H}$, we have
	\begin{multline*}
		\bigg\|\int_{\Omega}\langle x, F_{w}-G_{w}\rangle\langle F_{w}-G_{w}, x\rangle d\mu(w)\bigg\|^{\frac{1}{2}}=\big\|T_{F}x-T_{G}x\big\|\\
		\leq\big\|T_{F}x\big\|+\big\|T_{G}x\big\|\\
		=\bigg\|\int_{\Omega}\langle x, F_{w}\rangle\langle F_{w}, x\rangle d\mu(w)\bigg\|^{\frac{1}{2}}+\bigg\|\int_{\Omega}\langle x, G_{w}\rangle\langle G_{w}, x\rangle d\mu(w)\bigg\|^{\frac{1}{2}}\\
		\leq\sqrt{B}\|\langle x,x\rangle\|^{\frac{1}{2}}+\bigg\|\int_{\Omega}\langle x, G_{w}\rangle\langle G_{w}, x\rangle d\mu(w)\bigg\|^{\frac{1}{2}}\\
		\leq\sqrt{\frac{B}{C}}\bigg\|\int_{\Omega}\langle x, G_{w}\rangle\langle G_{w}, x\rangle d\mu(w)\bigg\|^{\frac{1}{2}}+\bigg\|\int_{\Omega}\langle x, G_{w}\rangle\langle G_{w}, x\rangle d\mu(w)\bigg\|^{\frac{1}{2}}\\
		=\bigg(\sqrt{\frac{B}{C}}+1\bigg)\bigg\|\int_{\Omega}\langle x, G_{w}\rangle\langle G_{w}, x\rangle d\mu(w)\bigg\|^{\frac{1}{2}}.
	\end{multline*}
	Similary, we have
	\begin{align*}
		\bigg\|\int_{\Omega}\langle x, F_{w}-G_{w}\rangle\langle F_{w}-G_{w}, x\rangle d\mu(w)\bigg\|^{\frac{1}{2}}\leq\bigg(\sqrt{\frac{D}{A}}+1\bigg)\bigg\|\int_{\Omega}\langle x, F_{w}\rangle\langle F_{w}, x\rangle d\mu(w)\bigg\|^{\frac{1}{2}}.
	\end{align*}
	Let $M=\min\Bigg\{\bigg(\sqrt{\frac{B}{C}}+1\bigg)^{2},\bigg(\sqrt{\frac{D}{A}}+1\bigg)^{2}\Bigg\}$, then the inequality \eqref{3..1} holds.
	
	\noindent
	$2.\Rightarrow1.$ Suppose that the inequality \eqref{3..1} holds. For any $x\in \mathcal{H}$, we have
	\begin{multline*}
		\sqrt{A}\|\langle x,x\rangle\|^{\frac{1}{2}}\leq\bigg\|\int_{\Omega}\langle x, F_{w}\rangle\langle F_{w}, x\rangle d\mu(w)\bigg\|^{\frac{1}{2}}\\
		\leq\bigg\|\int_{\Omega}\langle x, F_{w}-G_{w}\rangle\langle F_{w}-G_{w}, x\rangle d\mu(w)\bigg\|^{\frac{1}{2}}+\bigg\|\int_{\Omega}\langle x, G_{w}\rangle\langle G_{w}, x\rangle d\mu(w)\bigg\|^{\frac{1}{2}}\\
		\leq M^{\frac{1}{2}}\bigg\|\int_{\Omega}\langle x, G_{w}\rangle\langle G_{w}, x\rangle d\mu(w)\bigg\|^{\frac{1}{2}}+\bigg\|\int_{\Omega}\langle x, G_{w}\rangle\langle G_{w}, x\rangle d\mu(w)\bigg\|^{\frac{1}{2}}\\
		=\big(1+M^{\frac{1}{2}}\big)\bigg\|\int_{\Omega}\langle x, G_{w}\rangle\langle G_{w}, x\rangle d\mu(w)\bigg\|^{\frac{1}{2}}.
	\end{multline*}
	Also we obtain
	\begin{multline*}
		\bigg\|\int_{\Omega}\langle x, G_{w}\rangle\langle G_{w}, x\rangle d\mu(w)\bigg\|^{\frac{1}{2}}\\
		\leq\bigg\|\int_{\Omega}\langle x, F_{w}-G_{w}\rangle\langle F_{w}-G_{w}, x\rangle d\mu(w)\bigg\|^{\frac{1}{2}}+\bigg\|\int_{\Omega}\langle x,F_{w}\rangle\langle F_{w},x\rangle d\mu(w)\bigg\|^{\frac{1}{2}}\\
		\leq M^{\frac{1}{2}}\bigg\|\int_{\Omega}\langle x,F_{w}\rangle\langle F_{w},x\rangle d\mu(w)\bigg\|^{\frac{1}{2}}+\bigg\|\int_{\Omega}\langle x,F_{w}\rangle\langle F_{w},x\rangle d\mu(w)\bigg\|^{\frac{1}{2}}\\
		=\big(1+M^{\frac{1}{2}}\big)\bigg\|\int_{\Omega}\langle x,F_{w}\rangle\langle F_{w},x\rangle d\mu(w)\bigg\|^{\frac{1}{2}}\\
		\leq\big(1+M^{\frac{1}{2}}\big)\sqrt{B}\|\langle x,x\rangle\|^{\frac{1}{2}}.
	\end{multline*}
	So $G$ is a continuous frame for $\mathcal{H}$.
\end{proof}
\begin{theorem}
	Let $F$ be a $\ast$-continuous frame for $\mathcal{H}$ with respect to $(\Omega, \mu)$. Let $G: \Omega \to \mathcal{H}$ be a mapping such that  for all $x\in\mathcal{H}, w \to \langle x, G_{w}\rangle $ is a measurable function on $\Omega$. Then the following are equivalent:
	\begin{itemize}
		\item [1.] $G$ is a $\ast$-continuous frame for $\mathcal{H}$ with respect to $(\Omega, \mu)$.
		\item[2.] There exists a constant $M>0$, such that for any $x\in \mathcal{H}$, one has
		\begin{multline}\label{3.1}
			\bigg\|\int_{\Omega}\langle x, F_{w}-G_{w}\rangle\langle F_{w}-G_{w}, x\rangle d\mu(w)\bigg\|\\
			\leq M\min\bigg(\bigg\|\int_{\Omega}\langle x, F_{w}\rangle\langle F_{w}, x\rangle d\mu(w)\bigg\|,\bigg\|\int_{\Omega}\langle x, G_{w}\rangle\langle G_{w}, x\rangle d\mu(w)\bigg\|\bigg).
		\end{multline}
	\end{itemize}
\end{theorem}
\begin{proof}
	$1.\Rightarrow2.$ Suppose that $G$ is a $\ast$-continuous frame for $\mathcal{H}$ with lower and upper bounds $C$ and $D$, respectively. Then for any $x\in \mathcal{H}$, we have
	\begin{multline*}
		\bigg\|\int_{\Omega}\langle x, F_{w}-G_{w}\rangle\langle F_{w}-G_{w}, x\rangle d\mu(w)\bigg\|^{\frac{1}{2}}=\big\|T_{F}x-T_{G}x\big\|\\
		\leq\big\|T_{F}x\big\|+\big\|T_{G}x\big\|\\
		=\bigg\|\int_{\Omega}\langle x, F_{w}\rangle\langle F_{w}, x\rangle d\mu(w)\bigg\|^{\frac{1}{2}}+\bigg\|\int_{\Omega}\langle x, G_{w}\rangle\langle G_{w}, x\rangle d\mu(w)\bigg\|^{\frac{1}{2}}\\
		\leq\|B\|\|\langle x,x\rangle\|^{\frac{1}{2}}+\bigg\|\int_{\Omega}\langle x, G_{w}\rangle\langle G_{w}, x\rangle d\mu(w)\bigg\|^{\frac{1}{2}}\\
		\leq\|B\|\|C^{-1}\|\bigg\|\int_{\Omega}\langle x, G_{w}\rangle\langle G_{w}, x\rangle d\mu(w)\bigg\|^{\frac{1}{2}}+\bigg\|\int_{\Omega}\langle x, G_{w}\rangle\langle G_{w}, x\rangle d\mu(w)\bigg\|^{\frac{1}{2}}\\
		=\bigg(\|B\|\|C^{-1}\|+1\bigg)\bigg\|\int_{\Omega}\langle x, G_{w}\rangle\langle G_{w}, x\rangle d\mu(w)\bigg\|^{\frac{1}{2}}.
	\end{multline*}
	Similary we have
	\begin{align*}
		\bigg\|\int_{\Omega}\langle x, F_{w}-G_{w}\rangle\langle F_{w}-G_{w}, x\rangle d\mu(w)\bigg\|^{\frac{1}{2}}\leq\bigg(\|D\|\|A^{-1}\|+1\bigg)\bigg\|\int_{\Omega}\langle x, F_{w}\rangle\langle F_{w}, x\rangle d\mu(w)\bigg\|^{\frac{1}{2}}.
	\end{align*}
	Let $M=\min\Bigg\{\bigg(\|B\|\|C^{-1}\|+1\bigg)^{2},\bigg(\|D\|\|A^{-1}\|+1\bigg)^{2}\Bigg\}$, then the inequality \eqref{3.1} holds.

	\noindent
	$2.\Rightarrow1.$ Suppose that the inequality \eqref{3.1} holds. For any $x\in \mathcal{H}$, we have
	\begin{multline*}
		\|A^{-1}\|^{-1}\|\langle x,x\rangle\|^{\frac{1}{2}}\leq\bigg\|\int_{\Omega}\langle x, F_{w}\rangle\langle F_{w}, x\rangle d\mu(w)\bigg\|^{\frac{1}{2}}\\
		\leq\bigg\|\int_{\Omega}\langle x, F_{w}-G_{w}\rangle\langle F_{w}-G_{w}, x\rangle d\mu(w)\bigg\|^{\frac{1}{2}}+\bigg\|\int_{\Omega}\langle x, G_{w}\rangle\langle G_{w}, x\rangle d\mu(w)\bigg\|^{\frac{1}{2}}\\
		\leq M^{\frac{1}{2}}\bigg\|\int_{\Omega}\langle x, G_{w}\rangle\langle G_{w}, x\rangle d\mu(w)\bigg\|^{\frac{1}{2}}+\bigg\|\int_{\Omega}\langle x, G_{w}\rangle\langle G_{w}, x\rangle d\mu(w)\bigg\|^{\frac{1}{2}}\\
		=\big(1+M^{\frac{1}{2}}\big)\bigg\|\int_{\Omega}\langle x, G_{w}\rangle\langle G_{w}, x\rangle d\mu(w)\bigg\|^{\frac{1}{2}}.
	\end{multline*}
	Also we obtain
	\begin{multline*}
		\bigg\|\int_{\Omega}\langle x, G_{w}\rangle\langle G_{w}, x\rangle d\mu(w)\bigg\|^{\frac{1}{2}}\\
		\leq\bigg\|\int_{\Omega}\langle x, F_{w}-G_{w}\rangle\langle F_{w}-G_{w}, x\rangle d\mu(w)\bigg\|^{\frac{1}{2}}+\bigg\|\int_{\Omega}\langle x,F_{w}\rangle\langle F_{w},x\rangle d\mu(w)\bigg\|^{\frac{1}{2}}\\
		\leq M^{\frac{1}{2}}\bigg\|\int_{\Omega}\langle x,F_{w}\rangle\langle F_{w},x\rangle d\mu(w)\bigg\|^{\frac{1}{2}}+\bigg\|\int_{\Omega}\langle x,F_{w}\rangle\langle F_{w},x\rangle d\mu(w)\bigg\|^{\frac{1}{2}}\\
		=\big(1+M^{\frac{1}{2}}\big)\bigg\|\int_{\Omega}\langle x,F_{w}\rangle\langle F_{w},x\rangle d\mu(w)\bigg\|^{\frac{1}{2}}\\
		\leq\big(1+M^{\frac{1}{2}}\big)\|B\|\|\langle x,x\rangle\|^{\frac{1}{2}}.
	\end{multline*}
	So $G$ is a $\ast$-continuous frame for $\mathcal{H}$.
\end{proof}

\begin{theorem}
	Let $\{F_{w}\}_{w\in\Omega}$ be a  $\ast$-continuous frame for $\mathcal{H}$ with respect to $(\Omega, \mu)$, and with bounds $A$ and $B$. If $\{\Gamma_{w}\}_{w\in\Omega}$ is a $\ast$-continuous Bessel sequence with bound $E$ such that $\|A^{-1}\|^{-1}\geq \|E\|$, then $\{\Gamma_{w}+F_{w}\}_{w\in\Omega}$ is a $\ast$-continuous frame for $\mathcal{H}$ with respect to $(\Omega, \mu)$.
\end{theorem}
\begin{proof}
	Let $x\in \mathcal{H}$. Then  we have 
	\begin{align*}
		\left\|\int_{\Omega}\langle x,(F_{w}+\Gamma_{w})x\rangle\langle (F_{w}+\Gamma_{w})x,x\rangle d\mu(w)\right\|^{\frac{1}{2}}&=\|\{(F_{w}+\Gamma_{w})\}_{w\in\Omega}\|\\
		&\leq \|\{F_{w}x\}_{w\in\Omega}\|+\|\{\Gamma_{w}x\}_{w\in\Omega}\|\\
		&\leq \left\|\int_{\Omega}\langle x,F_{w}x\rangle\langle F_{w}x,x\rangle d\mu(w)\right\|^{\frac{1}{2}}+\\
		& \qquad	\left\|\int_{\Omega}\langle x,\Gamma_{w}x\rangle\langle \Gamma_{w}x,x\rangle d\mu(w)\right\|^{\frac{1}{2}} \\
		&\leq \|B\langle x,x\rangle B^{\ast} \|^{\frac{1}{2}}+\|E\langle x,x\rangle E^{\ast} \|^{\frac{1}{2}}\\
		&\leq \|B\|\|x\|+\|E\|\|x\|\\
		&\leq \big(\|B\|+\|E\|\big)\|x\|.
	\end{align*}
	Thus 
	\begin{equation}\label{eq5.2}
		\left\|\int_{\Omega}\langle x,(F_{w}+\Gamma_{w})x\rangle\langle (F_{w}+\Gamma_{w})x,x\rangle d\mu(w)\right\|\leq\big(\|B\|+\|E\|\big)^{2}\|x\|^{2}.
	\end{equation}
	On the other hand, 
	\begin{align*}
		\left\|\int_{\Omega}\langle x,(F_{w}+\Gamma_{w})x\rangle\langle (F_{w}+\Gamma_{w})x,x\rangle d\mu(w)\right\|^{\frac{1}{2}}&=\|\{(F_{w}+\Gamma_{w})x\}_{w\in\Omega}\|\\
		&\geq \|\{F_{w}x\}_{w\in\Omega}\|-\|\{\Gamma_{w}x\}_{w\in\Omega}\|\\
		&\geq \left\|\int_{\Omega}\langle x,F_{w}x\rangle\langle F_{w}x,x\rangle d\mu(w)\right\|^{\frac{1}{2}}-\\
		&\qquad\left\|\int_{\Omega}\langle x,\Gamma_{w}x\rangle\langle \Gamma_{w}x,x\rangle d\mu(w)\right\|^{\frac{1}{2}}\\
		&\geq \|A^{-1}\|^{-1}\|x\|-\|E\|\|x\|\\
		&\geq (\|A^{-1}\|^{-1}-\|E\|)\|x\|. 
	\end{align*}
	Hence 
	\begin{equation}\label{eq5.3}
		(\|A^{-1}\|^{-1}-\|E\|)\|x\|\leq \left\|\int_{\Omega}\langle x,(F_{w}+\Gamma_{w})x\rangle\langle (F_{w}+\Gamma_{w})x,x\rangle d\mu(w)\right\|^{\frac{1}{2}}. 
	\end{equation}
	Therefore, from \eqref{eq5.2} and 
	\eqref{eq5.3}, $\{(F_{w}+\Gamma_{w})\}_{w\in\Omega}$ is a $\ast$-continuous frame for $\mathcal{H}$.
\end{proof}
\begin{theorem}
	Let $\{T_{w}\}_{w \in \Omega}$ be a $\ast$-continuous frame  for $End_{\mathcal{A}}^{\ast}(\mathcal{H})$  with respect to $(\Omega, \mu)$ and with bounds $A$ and $B$, let $\{R_{w}\}_{w \in \Omega} \subset End_{\mathcal{A}}^{\ast}(\mathcal{H})$ and $\{\alpha_{w}\}_{w \in \Omega},\{\beta_{w}\}_{w \in \Omega} \in \mathbb{R}$ be two positively  family.
	If there exist two constants $0\leq \lambda, \mu<1$ such that for any $x \in\mathcal{H} $ we have 
	\begin{align*}
		\|\int_{\Omega}&\langle  x,(\alpha_{w}T_{w}-\beta_{w}R_{w}) x\rangle_{\mathcal{A}}\langle  (\alpha_{w}T_{w}-\beta_{w}R_{w})x, x\rangle_{\mathcal{A}} d\mu({\omega})\|^{\frac{1}{2}}\leq \\
		& \lambda \|\int_{\Omega}\langle  x,\alpha_{w}T_{w}x\rangle_{\mathcal{A}}\langle  \alpha_{w}T_{w}x,x\rangle_{\mathcal{A}} d\mu({\omega})\|^{\frac{1}{2}}
		+\mu\|\int_{\Omega}\langle  x,\beta_{w}R_{w} x\rangle_{\mathcal{A}}\langle \beta_{w}R_{w} x, x\rangle_{\mathcal{A}} d\mu({\omega})\|^{\frac{1}{2}}
	\end{align*}
	Then  $\{R_{w}\}_{w \in \Omega}$ is a $\ast$-continuous  frame for $End_{\mathcal{A}}^{\ast}(\mathcal{H})$ with respect to $(\Omega, \mu)$.
\end{theorem}
\begin{proof}
	For every $x \in \mathcal{H}$, we have
	\begin{align*}
		\|\{\beta_{w}R_{w}x\}_{w \in \Omega}\|&\leq \|\{(\alpha_{w}T_{w}-\beta_{w}R_{w})x\}_{w \in \Omega}\| +\|\{\alpha_{w}T_{w}x\}_{w\in \Omega}\|\\
		&\leq \mu \|\{\beta_{w}R_{w}x\}_{w\in \Omega}\| + \lambda \|\{\alpha_{w}T_{w}x\}_{w\in \Omega}\|+ \|\{\alpha_{w}T_{w}x\}_{w\in \Omega}\|\\
		&=(1+\lambda) \|\{\alpha_{w}T_{w}x\}_{w\in \Omega}\|+ \mu \|\{\beta_{w}R_{w}x\}_{w \in \Omega}\|.
	\end{align*}
	Then,
	$$(1-\mu)\|\{\beta_{w}R_{w}x\}_{w\in \Omega}\|\leq(1+\lambda) \|\alpha_{w}T_{w}x\|. $$
	Therefore
	$$(1-\mu)\inf_{\omega \in \Omega} (\beta_{w})\|\{R_{w}x\}_{w \in \Omega}\|\leq(1+\lambda) \sup_{\omega \in \Omega}(\alpha_{w})\|\{T_{w}x\}_{w\in \Omega}\|. $$
	Hence 
	$$\|\{R_{w}x\}_{w\in \Omega}\| \leq \frac{(1+\lambda) \sup_{\omega \in \Omega}(\alpha_{w})}{(1-\mu)\inf_{\omega \in \Omega} (\beta_{w})}\|\{T_{w}x\}_{w\in \Omega}\|.$$
	Also, for all $x \in \mathcal{H}$, we have 
	\begin{align*}
		\|\{(\alpha_{w}T_{w}x\}_{w\in \Omega}\| &\leq \|\{(\alpha_{w}T_{w}-\beta_{w}R_{w})x\}_{w \in \Omega}\| +\|\{\beta_{w}R_{w}x\}_{w \in \Omega}\|\\
		&\leq \mu \|\{\beta_{w}R_{w}x\}_{w \in \Omega}\| + \lambda \|\{\alpha_{w}T_{w}x\}_{w\in \Omega}\|+ \|\{\alpha_{w}T_{w}x\}_{w\in \Omega}\|\\
		&= \lambda \|\{\alpha_{w}T_{w}x\}_{w\in \Omega}\|+(1+\mu)\|\{\beta_{w}R_{w}x\}_{w\in \Omega}\|.
	\end{align*}
	then
	$$(1- \lambda)\|\{\alpha_{w}T_{w}x\}_{w\in \Omega}\| \leq (1+\mu)\|\{\beta_{w}R_{w}x\}_{w\in \Omega}\|.$$
	Hence 
	$$(1- \lambda)\inf_{\omega \in \Omega} (\alpha_{w})\|\{T_{w}x\}_{w\in \Omega}\| \leq (1+\mu)\sup_{\omega \in \Omega}(\beta_{w})\|\{R_{w}x\}_{w\in \Omega}\|.$$
	Thus
	$$\frac{(1- \lambda)\inf_{\omega \in \Omega} (\alpha_{w})}{(1+\mu)\sup_{\omega \in \Omega}(\beta_{w})}\|\{T_{w}x\}_{w\in \Omega}\| \leq \|\{R_{w}x\}_{w\in \Omega}\|.$$
	Therefore
	$$A(\frac{(1- \lambda)\inf_{\omega \in \Omega} (\alpha_{w})}{(1+\mu)\sup_{\omega \in \Omega}(\beta_{w})}) \|\langle x,x\rangle_{\mathcal{A}}\|(\frac{(1- \lambda)\inf_{\omega \in \Omega} (\alpha_{w})}{(1+\mu)\sup_{\omega \in \Omega}(\beta_{w})})A^{*}\leq \|\{R_{w}x\}_{w}\|^2.  $$
	So,  
	\begin{align*}
		\|\{R_{w}x\}_{w\in \Omega}\|^2&\leq (\frac{(1+ \lambda)\sup_{\omega \in \Omega}(\alpha_{w})}{(1-\mu)\inf_{\omega \in \Omega} (\beta_{w})})^2  \|\{T_{w}x\}_{w\in \Omega}\|^2 \\\leq
		& B (\frac{(1+ \lambda)sup(\alpha_{w})}{(1-\mu)\inf_{\omega \in \Omega} (\beta_{w})})  \|\langle x,x \rangle_{\mathcal{A}}\|(\frac{(1+ \lambda)sup(\alpha_{w})}{(1-\mu)\inf_{\omega \in \Omega} (\beta_{w})})B^{*} .  
	\end{align*}
	Hence 
	\begin{align*}
		&A(\frac{(1- \lambda) \inf_{\omega \in \Omega} (\alpha_{w})}{(1+\mu) \sup_{\omega \in \Omega}(\beta_{w})}) \|\langle x,x\rangle_{\mathcal{A}}\|(\frac{(1- \lambda) \inf_{\omega \in \Omega} (\alpha_{w})}{(1+\mu) \sup_{\omega \in \Omega}(\beta_{w})})A^{*}\\&\leq \|\int_{\Omega}\langle  x,R_{w}x \rangle_{\mathcal{A}}\langle R_{w} x,x \rangle_{\mathcal{A}} d\mu({\omega})\|\\
		&\leq  B (\frac{(1+ \lambda) \sup_{\omega \in \Omega}(\alpha_{w})}{(1-\mu) \inf_{\omega \in \Omega} (\beta_{w})})  \|\langle x,x \rangle_{\mathcal{A}}\|(\frac{(1+ \lambda) \sup_{\omega \in \Omega}(\alpha_{w})}{(1-\mu) \inf_{\omega \in \Omega} (\beta_{w})})B^{*} 
	\end{align*}
	This give that $\{R_{w}\}_{w \in \Omega}$ is a $\ast$-continuous frame for $End_{\mathcal{A}}^{\ast}(\mathcal{H})$ with respect to $(\Omega, \mu)$.
\end{proof}
\begin{theorem}
	Let $\{T_{w}\}_{w \in \Omega}$ be a $\ast$-continuous frame for $End_{\mathcal{A}}^{\ast}(\mathcal{H})$ with bounds $\nu$ and $\delta$. Let $\{R_{w}\}_{w \in \Omega} \in End_{\mathcal{A}}^{\ast}(\mathcal{H})$ and  $\alpha, \, \beta \geq 0$. If $0\leq \alpha + \frac{\beta}{\nu\nu^{\ast}}< 1$ such that for all $x\in \mathcal{H}$, we have  
	$$\|\int_{\Omega}\langle  x,(T_{w}-R_{w})x\rangle_{\mathcal{A}}\langle  (T_{w}-R_{w})x,x\rangle_{\mathcal{A}} d\mu({\omega})\|\leq \alpha \|\int_{\Omega}\langle  x,T_{w}  x\rangle_{\mathcal{A}}\langle T_{w} x,  x\rangle_{\mathcal{A}} d\mu({\omega})\| +\beta \|\langle  x, x\rangle_{\mathcal{A}}\|$$
	Then $\{R_{w}\}_{w \in \Omega}$ is a $\ast$-continuous frame  with bounds $\nu\left(1-\sqrt{\alpha +\frac{\beta}{\nu\nu^{\ast}}}\right)$ and $\delta\left(1+\sqrt{\alpha +\frac{\beta}{\nu\nu^{\ast}}}\right)$.
\end{theorem}
\begin{proof}
	Let $\{T_{w}\}_{w \in \Omega}$ be a $\ast$- continuous  fram with bounds $\nu$ and $\delta$. Then for any $x \in \mathcal{H}$, we have
	\begin{align*}
		\|\{T_{w}x\}_{w\in \Omega}\|&\leq \|\{(T_{w}-R_{w})x\}_{w\in \Omega}\| +\|\{R_{w}x\}_{w\in \Omega}\|\\
		&\leq (\alpha \|\int_{\Omega}\langle  x,T_{w}x\rangle_{\mathcal{A}}\langle  T_{w}x,x\rangle_{\mathcal{A}} d\mu({\omega})\| +\beta \|\langle  x, x\rangle_{\mathcal{A}}\|)^{\frac{1}{2}}\\
		& \quad +\|\int_{\Omega}\langle  x,R_{w} x\rangle_{\mathcal{A}}\langle  R_{w}x, x\rangle_{\mathcal{A}} d\mu({\omega})\|^{\frac{1}{2}}\\
		&\leq  (\alpha \|\int_{\Omega}\langle T_{w}x,x\rangle_{\mathcal{A}} d\mu({\omega})\| +\frac{\beta}{\nu\nu^{\ast}} \|\int_{\Omega}\langle x,T_{w}  x\rangle_{\mathcal{A}}\langle T_{w}x,  x\rangle_{\mathcal{A}} d\mu({\omega})\|)^{\frac{1}{2}}\\
		&\quad +\|\int_{\Omega}\langle  x,R_{w} x\rangle_{\mathcal{A}}\langle  R_{w}x, x\rangle_{\mathcal{A}} d\mu({\omega})\|^{\frac{1}{2}}\\
		&=\sqrt{\alpha +\frac{\beta}{\nu\nu^{\ast}}} \|\{T_{w}x\}_{w\in \Omega}\|+\|\int_{\Omega}\langle  x,R_{w} x\rangle_{\mathcal{A}}\langle R_{w}x, x\rangle_{\mathcal{A}} d\mu({\omega})\|^{\frac{1}{2}}.
	\end{align*}
	Therefore
	$$\left(1-\sqrt{\alpha +\frac{\beta}{\nu\nu^{\ast}}}\right)\|\{T_{w}x\}_{w\in \Omega}\| \leq \|\int_{\Omega}\langle  x,R_{w}x\rangle_{\mathcal{A}}\langle R_{w}x,x\rangle_{\mathcal{A}} d\mu({\omega})\|^{\frac{1}{2}}.  $$
	Thus 
	\begin{align*}
		\nu\left(1-\sqrt{\alpha +\frac{\beta}{\nu\nu^{\ast}}}\right)\|\langle  x, x\rangle_{\mathcal{A}}\|\left(1-\sqrt{\alpha +\frac{\beta}{\nu\nu^{\ast}}}\right)\nu^{\ast}
		&\leq  \|\int_{\Omega}\langle  x,R_{w} x\rangle_{\mathcal{A}}\langle R_{w} x, x\rangle_{\mathcal{A}} d\mu({\omega})\|.\\
	\end{align*}
	Also, we have 
	\begin{align*}
		\|\{R_{w}x\}_{w\in \Omega}\|&\leq \|\{(T_{w}-R_{w})x\}_{w\in \Omega}\| +\|\{T_{w}x\}_{w\in \Omega}\|\\
		&\leq \sqrt{\alpha +\frac{\beta}{\nu\nu^{\ast}}} \|\{T_{w}x\}_{w\in \Omega}\|+\|\{T_{w}x\}_{w\in \Omega}\|\\
		&= \left(1+\sqrt{\alpha +\frac{\beta}{\nu\nu^{\ast}}}\right) \|\{T_{w}x\}_{w\in \Omega}\|\\
		&\leq \sqrt{\delta}  \left(1+\sqrt{\alpha +\frac{\beta}{\nu\nu^{\ast}}}\right) \|\langle x,x\rangle_{\mathcal{A}}\|^{\frac{1}{2}}\sqrt{\delta^{\ast}}.
	\end{align*}
	Hence   
	$$\|\int_{\Omega}\langle  x,R_{w} x\rangle_{\mathcal{A}}\langle  R_{w}x, x\rangle_{\mathcal{A}} d\mu({\omega})\|\leq \delta \left(1+\sqrt{\alpha +\frac{\beta}{\nu\nu^{\ast}}}\right) \|\langle x,x\rangle_{\mathcal{A}}\|\left(1+\sqrt{\alpha +\frac{\beta}{\nu\nu^{\ast}}}\right)\delta^{\ast}.$$
	Therefore 
	\begin{align*}
		\small \nu\left(1-\sqrt{\alpha +\frac{\beta}{\nu\nu^{\ast}}}\right)\|\langle x, x\rangle_{\mathcal{A}}\| & \left(1-\sqrt{\alpha +\frac{\beta}{\nu\nu^{\ast}}}\right)\nu^{\ast} \leq \|\int_{\Omega}\langle  x,R_{w} x\rangle_{\mathcal{A}}\langle R_{w} x, x\rangle_{\mathcal{A}} d\mu({\omega})\|\\
		&\leq  \delta \left(1+\sqrt{\alpha +\frac{\beta}{\nu\nu^{\ast}}}\right) \|\langle x,x\rangle_{\mathcal{A}}\|\delta \left(1+\sqrt{\alpha +\frac{\beta}{\nu\nu^{\ast}}}\right) \delta^{\ast}.
	\end{align*}
	Hence $\{R_{w}\}_{w \in \Omega}$ is a $\ast$-continuous frame  with bounds $\nu\left(1-\sqrt{\alpha +\frac{\beta}{\nu\nu^{\ast}}}\right)$ and $\delta\left(1+\sqrt{\alpha +\frac{\beta}{\nu\nu^{\ast}}}\right)$.	
\end{proof}
\begin{corollary}
	Let $\{T_{w}\}_{w \in \Omega}$  is a $\ast$-continuous frame for $End_{\mathcal{A}}^{\ast}(\mathcal{H})$  with bounds $\nu$ and $\delta$. Let $\{R_{w}\}_{w \in \Omega} \subset End_{\mathcal{A}}^{\ast}(\mathcal{H}) $ and $0\leq \alpha $.
	If $0\leq \alpha <\nu$ such that 
	$$\|\int_{\Omega}\langle  x,(T_{w}-R_{w}) x \rangle_{\mathcal{A}}\langle  (T_{w}-R_{w})x, x \rangle_{\mathcal{A}} d\mu({\omega})\|\leq \alpha \|\langle x, x\rangle_{\mathcal{A}}\|,\,\ x \in \mathcal{H},$$
	then $\{R_{w}\}_{w \in \Omega}$ is a $\ast$- continuous  frame with bounds $\nu(1-\sqrt{{\frac{\alpha}{\nu\nu^{\ast}}}})^2$ and $\delta(1+\sqrt{{\frac{\alpha}{\nu\nu^{\ast}}}})^2$.
	
\end{corollary}
\begin{proof}
	The proof comes from the previous theorem.
\end{proof}
\begin{theorem}
	For $k=1,2,...,n$, let $\{T_{k,w}\} _{w \in \Omega} \subset End_{\mathcal{A}}^{\ast}(\mathcal{H})$ be a $\ast$-continuous operator frames with bounds  $\nu_k$ and  $\delta_k$ and let $\{R_{k,w}\} _{w \in \Omega}\subset End_{\mathcal{A}}^{\ast}(\mathcal{H})$.\\ Let $L: L^{2}(\Omega, \mathcal{A})\longrightarrow L^{2}(\Omega, \mathcal{A}) $ be a bounded linear operator such that
	\begin{equation*}
		L(\{\sum_{k=1}^{n}R_{k,w}\} _{w \in \Omega})=\{T_{p,w}\} _{w \in \Omega} \quad for some \quad p \in \{1,2,\dots,n\}.
	\end{equation*}
	If there exists a constant $\lambda >0$ such that for each $x\in \mathcal{H}$ and $ k=1,\dots,n$, we have
	\begin{equation*}
		\| \int_{\Omega}\langle  x,(T_{k,w}-R_{k,w}) x\rangle_{\mathcal{A}}\langle (T_{k,w}-R_{k,w}) x, x\rangle_{\mathcal{A}} d\mu(w)\| \leq \lambda \|\int_{\Omega}\langle  x,T_{k,w} x\rangle_{\mathcal{A}}\langle T_{k,w} x, x\rangle_{\mathcal{A}} d\mu(w)\|.
	\end{equation*}
	Then $\{\sum_{k=1}^{n}R_{k,w}\} _{w \in \Omega}$ is a $\ast$-continuous operator frame for $End_{\mathcal{A}}^{\ast}(\mathcal{H})$.
\end{theorem}
\begin{proof}
	For all $x \in \mathcal{H}$, we have
	\begin{align*}
		\|\{\sum_{k=1}^{n}R_{k,w}x\} _{w \in \Omega}\|&\leq \sum_{k=1}^{n}\|\{R_{k,w}x\} _{w \in \Omega}\|\\
		&\leq \sum_{k=1}^{n}(\|\{T_{k,w}-R_{k,w}x\}_{w \in \Omega}\|+\|\{T_{k,w}x\}_{w \in \Omega}\|)\\
		&\leq (1+\sqrt{\lambda}) \|\sum_{k=1}^{n} \|\{T_{k,w}x\} _{w \in \Omega}\|\\
		&\leq  (1+\sqrt{\lambda})\sum_{k=1}^{n}\sqrt{\delta_k} \|\langle  x, x\rangle_{\mathcal{A}}\|^{\frac{1}{2}}\sqrt{\delta^{*}_k}.
	\end{align*}
	Since, for any $x\in \mathcal{H}$, we have 
	$$\|L(\{\sum_{k=1}^{n}R_{k,w}\} _{w \in \Omega})\|=\|\{T_{p,w}\} _{w \in \Omega}\|$$
	Then
	\begin{align*}
		\sqrt{\nu_p}\|\langle  x, x\rangle_{\mathcal{A}}\|^{\frac{1}{2}}\sqrt{\nu^{*}_p}&\leq \|\{T_{p,w}\} _{w \in \Omega}\|\\
		&= \|L(\{\sum_{k=1}^{n}R_{k,w}\} _{w \in \Omega})\|\\
		&\leq \|L\| \|\{\sum_{k=1}^{n}R_{k,w}\} _{w \in \Omega}\|.
	\end{align*}
	Hence
	$$\frac{\sqrt{\nu_p}}{\|L\|}\|\langle  x, x\rangle_{\mathcal{A}}\|^{\frac{1}{2}}{\nu^{*}_p}\leq \|\{\sum_{k=1}^{n}R_{k,w}\} _{w \in \Omega}\|,\qquad  x\in \mathcal{H}.  $$
	Therefore 
	$$\frac{\sqrt{\nu_p}}{\|L\|}\|\langle  x, x\rangle_{\mathcal{A}}\|^{\frac{1}{2}}{\nu^{*}_p}\leq \|\{\sum_{k=1}^{n}R_{k,w}\} _{w \in \Omega}\|\leq (1+\sqrt{\lambda})\sum_{k=1}^{n}\sqrt{\delta_k} \|\langle  x, x\rangle_{\mathcal{A}}\|^{\frac{1}{2}}\sqrt{\nu^{*}_p}.$$
	This give that $\{\sum_{k=1}^{n} R_{k,w}\} _{w \in \Omega}$ is a $\ast$-continuous operator frame for $End_{\mathcal{A}}^{\ast}(\mathcal{H})$.	
\end{proof}
\begin{theorem}
	Let  $\{\Lambda_{w }: w\in\Omega\}$ is a continuous frame for Hilbert $C^{\ast}$-module $\mathcal{H}$ with bound $A$, $B$ and $\{\Gamma_{w}: w\in\Omega\}$  be a family of operators, such that  $\{\Gamma_{w}: w\in\Omega\}$ is strongly measurable for each $x \in \mathcal{H}$. 
	If there exist constants $\lambda_{1},\lambda_{2},\gamma \geq 0,$ such that $max\{\lambda_{2},\frac{\gamma}{A}+\lambda_{1} \}<1$ and for each $x,y \in \mathcal{H}$:	
	\begin{multline*}
		\lVert\int_{\Omega}\langle(\Lambda^{\ast}_{w}\Lambda_{w}-\Gamma^{\ast}_{w}\Gamma_{w})x,y\rangle d\mu(w)\lVert \leq \\
		\lambda_{1}\lVert\int_{\Omega}\langle\Lambda^{\ast}_{w}\Lambda_{w}x,y\rangle d\mu(w)\lVert+\lambda_{2}\lVert\int_{\Omega}\langle\Gamma^{\ast}_{w}\Gamma_{w}x,y\rangle d\mu(w)\lVert+\gamma\lVert x\lVert^{2}.
	\end{multline*}
	Then
	$\{ \Gamma_{w }\in End_{\mathcal{A}}^{\ast}(U,V_{w}): w\in\Omega\}$ is a continuous frame for Hilbert $C^{\ast}$-module $U$ with respect to $\{V_{w}: w\in \Omega\} $ with bounds:
	$\frac{(1-\lambda_{1})A-\gamma}{1+\lambda_{2}}$
	and $\frac{(1+\lambda_{1})B+\gamma }{1-\lambda_{2}}$
\end{theorem}
\begin{proof}
	For each $x,y \in \mathcal{U}$
	\begin{align*}
		\lVert\int_{\Omega}\langle\Gamma^{\ast}_{w}\Gamma_{w}x,y\rangle d\mu(w)\lVert &\leq\lVert\int_{\Omega}\langle(\Lambda^{\ast}_{w}\Lambda_{w}-\Gamma^{\ast}_{w}\Gamma_{w})x,y\rangle d\mu(w)\lVert+
		\lVert\int_{\Omega}\langle\Lambda^{\ast}_{w}\Lambda_{w}x,y\rangle d\mu(w)\lVert\\
		&\leq 	(1+\lambda_{1})\lVert\int_{\Omega}\langle\Lambda^{\ast}_{w}\Lambda_{w}x,y\rangle d\mu(w)\lVert+\lambda_{2}\lVert\int_{\Omega}\langle\Gamma^{\ast}_{w}\Gamma_{w}x,y\rangle d\mu(w)\lVert+\gamma\lVert x\lVert^{2}\\	
	\end{align*}
	For each $x,y \in \mathcal{U}$,
	$$\lVert\int_{\Omega}\langle\Gamma^{\ast}_{w}\Gamma_{w}x,y\rangle d\mu(w)\lVert\leq 	\frac{(1+\lambda_{1})}{1-\lambda_{2}}\lVert\int_{\Omega}\langle\Lambda^{\ast}_{w}\Lambda_{w}x,y\rangle+\frac{\gamma}{1-\lambda_{2}}\lVert x\lVert^{2}$$
	Therefore, for each $x,y \in \mathcal{U}$,
	$$\lVert\int_{\Omega}\langle\Gamma^{\ast}_{w}\Gamma_{w}x,y\rangle d\mu(w)\lVert\leq 	\frac{(1+\lambda_{1})}{1-\lambda_{2}}B\lVert\langle x,x\rangle \lVert +\frac{\gamma}{1-\lambda_{2}}\langle x,x\rangle \lVert$$
	Hence,for each $x \in \mathcal{U}$
	\begin{align*}
		\lVert\int_{\Omega}\langle\Gamma_{w}x,\Gamma_{w}x\rangle d\mu(w)\lVert&\leq 	(\frac{(1+\lambda_{1})}{1-\lambda_{2}}B +\frac{\gamma}{1-\lambda_{2}})\lVert\langle x,x\rangle \lVert\\
		&\leq 	(\frac{(1+\lambda_{1})B+\gamma}{1-\lambda_{2}} )\lVert\langle x,x\rangle \lVert
	\end{align*}
	Therefore, $\{\Gamma_{w}: w\in\Omega\}$ is a Continuous Bessel family for Hilbert $C^{\ast}$-module $U$.\\
	Now, we show that $\{\Gamma_{w}: w\in\Omega\}$ has the lower continuous frame condition.\\
	\begin{align*}
		\lVert\int_{\Omega}\langle\Gamma^{\ast}_{w}\Gamma_{w}x,x\rangle d\mu(w)\lVert &=\lVert\int_{\Omega}\langle(\Lambda^{\ast}_{w}\Lambda_{w}-\Gamma^{\ast}_{w}\Gamma_{w})x,x\rangle d\mu(w)+
		\int_{\Omega}\langle\Lambda^{\ast}_{w}\Lambda_{w}x,x\rangle d\mu(w)\lVert\\	
		&\geq 
		\int_{\Omega}\langle\Lambda^{\ast}_{w}\Lambda_{w}x,x\rangle d\mu(w)\lVert-\lVert\int_{\Omega}\langle(\Lambda^{\ast}_{w}\Lambda_{w}-\Gamma^{\ast}_{w}\Gamma_{w})x,x\rangle d\mu(w)\\
		&\geq (1- \lambda_{1})\lVert\int_{\Omega}\langle\Lambda^{\ast}_{w}\Lambda_{w}x,x\rangle d\mu(w)\lVert-\lambda_{2}\lVert\int_{\Omega}\langle\Gamma^{\ast}_{w}\Gamma_{w}x,x\rangle d\mu(w)\lVert-\gamma\lVert x\lVert^{2}	
	\end{align*}
	Hence, for each $x \in \mathcal{U}$
	\begin{align*}
		\lVert\int_{\Omega}\langle\Gamma^{\ast}_{w}\Gamma_{w}x,x\rangle d\mu(w)\lVert&\geq 	(\frac{(1-\lambda_{1})}{1+\lambda_{2}}A -\frac{\gamma}{1+\lambda_{2}})\lVert\langle x,x\rangle \lVert\\
		&\geq 	(\frac{(1-\lambda_{1})A-\gamma}{1+\lambda_{2}} )\lVert\langle x,x\rangle \lVert
	\end{align*}
	Therefore, 	$\{ \Gamma_{w }\in End_{\mathcal{A}}^{\ast}(U,V_{w}): w\in\Omega\}$ is a continuous frame for Hilbert $C^{\ast}$-module $U$ with respect to $\{V_{w}: w\in \Omega\} $ with bounds:
	$\frac{(1-\lambda_{1})A-\gamma}{1+\lambda_{2}}$
	and $\frac{(1+\lambda_{1})B+\gamma}{1-\lambda_{2}}$.
\end{proof}
\section{The dual ${\ast}$-continuous frames}
\begin{definition}
	Let $\left\{F_{w} : w \in \Omega \right\}$ be a ${\ast}$-continuous frame for $\mathcal{H}$ with ${\ast}$-continuous frame operator $S$. If there exists a ${\ast}$-continuous frame $\left\{G_{w} \in \mathcal{H}: w \in \Omega\right\}$ for $\mathcal{H}$ such that $F=\int_{\Omega}\left\langle F, G_{w}\right\rangle F_{w}$ for $f \in \mathcal{H}$, then the $*$continuous frame $\left\{G_{w}\right\}_{w \in \Omega}$ is called the dual ${\ast}$-continuous frame of $\left\{F_{w}\right\}_{w \in \Omega}$. The spacial dual ${\ast}$-continuous frame $\left\{S^{-1} F_{w}\right\}_{w \in \Omega}$ is said to be the canonical dual ${\ast}$-continuous frame of $\left\{F_{w}\right\}_{w \in \Omega}$.
\end{definition}	
It is well known, that if $T$ and $V$ are pre-* continuous frame operators of two $*$-Bessel sequences $\left\{F_{w}\right\}_{w \in \Omega}$ and $\left\{G_{w}\right\}_{w \in \Omega}$, respectively, then $F=\int_{\Omega}\left\langle F, G_{w}\right\rangle F_{w}$ for $f \in \mathcal{H}$ if and only if $T^{*} V=$ $Id_{\mathcal{H}}$. The following lemma shows that the roles of two $*$- continuous Bessel sequences can be changed and obtains a relation between bounds of $\left\{F_{w}\right\}_{w \in \Omega}$ and $\left\{G_{w}\right\}_{w \in \Omega}$.

\begin{lemma}\label{Lemma7}
	Let $\left\{F_{w}\right\}_{w \in \Omega}$ and $\left\{G_{w}\right\}_{w \in \Omega}$ be ${\ast}$-continuous Bessel sequences for $\mathcal{H}$ with pre-${\ast}$-continuous frame operators $T$ and $V$, respectively. Then for $x \in \mathcal{H}$ the following statements are equivalent.
	\begin{enumerate}
		\item [i.] $x=\int_{\Omega}\left\langle x, g_{w}\right\rangle F_{w}$.
		\item [ii.] $x=\int_{\Omega}\left\langle x, F_{w}\right\rangle g_{w}$.
	\end{enumerate}
	In the case that one of the above equalities is satisfied, $\left\{F_{w}\right\}_{w \in \Omega}$ and $\left\{G_{w}\right\}_{w \in \Omega}$ are dual ${\ast}$-continuous frames.
	Moreover, if $B$ is an upper ${\ast}$-continuous frame bound for $\left\{F_{w}\right\}_{w \in \Omega}$ and $S$ is its ${\ast}$-continuous frame operator, then $B\left\|S^{-1}\right\|^{-\frac{1}{2}}\|T\|^{-1}$ is a lower ${\ast}$-continuous frame bound for $\left\{G_{w}\right\}_{w \in \Omega}$.
\end{lemma}
\begin{proof}
	Suppose that the conditions $i$ and $i i$ are valid. By $i$, we have $T^{*} V=i d_{\mathcal{H}}$ and $T^{*}$ is surjective. 
	Then it follows that the sequence $\left\{F_{w}\right\}_{w \in \Omega}$ is a ${\ast}$-continuous frame. Similarly, the ${\ast}$-continuous Bessel sequence $\left\{G_{w}\right\}_{w \in \Omega}$ is a ${\ast}$-continuous frame.
\end{proof}	
Finally, let $B$ be an upper ${\ast}$-continuous frame bound for $\left\{F_{w}\right\}_{w \in \Omega}$. By the definition of ${\ast}$-continuous frames $\left\{F_{w}\right\}_{w \in \Omega}$ and $T^{*} V=i d_{\mathcal{H}}$, we can write $$\langle T x, T x\rangle \leq B\left\langle T^{*} V x, T^{*} V x\right\rangle B^{*} for x \in \mathcal{H}.$$  
Using Lemma \ref{Lemma0-1}, we have $$\left\|\left(T^{*} T\right)^{-1}\right\|^{-1}\langle x, x\rangle \leq\langle T x, T x\rangle.$$ For $x \in \mathcal{H}$. It follows that $$B^{-1}\left\|S^{-1}\right\|^{-\frac{1}{2}}\|T\|^{-1}\langle x, x\rangle\left(B^{-1}\left\|S^{-1}\right\|^{-\frac{1}{2}}\|T\|^{-1}\right)^{*} \leq\langle V x, V x\rangle, \qquad x \in \mathcal{H}. $$ 
Therefore, $B\left\|S^{-1}\right\|^{-\frac{1}{2}}\|T\|^{-1}$ is a lower ${\ast}$-continuous frame bound for $\left\{G_{w}\right\}_{w \in J\Omega}$ and the proposition follows.	

\begin{proposition}
	Let $\left\{x_{w}\right\}_{w \in \Omega}$ be a sequence of a finitely or countably generated Hilbert $\mathcal{A}$-module $\mathcal{H}$ over a unital $C^{*}$-algebra $\mathcal{A}$. Then $\left\{x_{w}\right\}_{w \in \Omega}$ is a Bessel sequence with Bessel bound $D$ if and only if the operator $T: L^{2}(\Omega, \mathcal{A}) \rightarrow \mathcal{H}$ defined $b y$
	$
	T\left\{c_{w}\right\}_{w \in \Omega}=\int_{\Omega} c_{w} x_{w}d\mu(w)
	$
	is a well-defined bounded operator from $L^{2}(\Omega, \mathcal{A})$ into $\mathcal{H}$ with $\|T\| \leq \sqrt{D}$.
\end{proposition}
\begin{proof}
	" $\Rightarrow "$. Suppose that $\left\{x_{w}\right\}_{w \in \Omega}$ is a Bessel sequence with bound $D$.\\
	To see the boundedness of $T$, we consider
	\begin{align*}
		\left\|T\left\{c_{w}\right\}\right\|^{2} &=\sup _{\|x\|=1}\left\|\left\langle T\left\{c_{w}\right\}, x\right\rangle\right\|^{2} \\
		&=\sup _{\|x\|=1}\left\|\int_{\Omega} c_{w}\left\langle x_{w}, x\right\rangle d\mu(w)\right\|^{2}\\
		&\leq \sup _{\|x\|=1}\left\|\int_{\Omega}\left\langle x, x_{w}\right\rangle\left\langle x_{w}, x\right\rangle d\mu(w)\right\| \cdot\left\|\int_{\Omega} c_{w} c_{w}^{*}d\mu(w)\right\|\\ 
		&\leq D\left\|\int_{\Omega} c_{w} c_{w}^{*}d\mu(w)\right\|=D\left\|\left\{c_{w}\right\}\right\|^{2}
	\end{align*}
	This yields that $\|T\| \leq \sqrt{D}$.\\
	$" \Leftarrow "$. For arbitrary $x \in \mathcal{H}$ and $\left\{c_{w}\right\}_{w \in \Omega} \in L^{2}(\Omega, \mathcal{A})$, we have
	\begin{equation}\label{equation}
		\left\langle x, T\left\{c_{w}\right\}\right\rangle=\left\langle x, \int_{\Omega} c_{w} x_{w}d\mu(w)\right\rangle =\int_{\Omega}\left\langle x, x_{w}\right\rangle c_{w}^{*}d\mu(w)
	\end{equation}
	we see that $\left\{\left\langle x, x_{w}\right\rangle\right\}_{w \in \Omega} \in L^{2}(\Omega, \mathcal{A})$.
	From \eqref{equation}, we get
	$$
	\left\langle x, T\left\{c_{w}\right\}\right\rangle=\left\langle\left\{\left\langle x, x_{w}\right\rangle\right\},\left\{c_{w}\right\}\right\rangle
	$$
	which implies that $T$ is adjointable, with $T^{*} x=\left\{\left\langle x, x_{w}\right\rangle\right\}_{w \in \Omega}$, and hence $T$ is bounded.
	Note that
	$
	\left\|T^{*} x\right\|^{2}=\left\|\int_{\Omega}\left\langle x, x_{w}\right\rangle\left\langle x_{w}, x\right\rangle d\mu(w)\right\| \leq D\|\langle x, x\rangle\|=D\|x\|^{2}
	$
	Consequently, $\|T\|=\left\|T^{*}\right\| \leq \sqrt{D}$, as desired.
\end{proof}

\begin{proposition}
	Let $\left\{F_{w} : w \in \Omega\right\}$ be a ${\ast}$-continuous frame for $\mathcal{H}$ with pre-${\ast}$-continuous frame operator $T$. The set of all of ${\ast}$-continuous dual frames for $\left\{F_{w}\right\}_{w \in \Omega}$ is precisely the set of the families $\left\{g_{w}\right\}_{w \in \Omega}=$ $\left\{V^{*}\left(e_{w}\right)\right\}_{w \in \Omega}$, where $V: \mathcal{H} \longrightarrow L^{2}(\Omega, \mathcal{A})$ is an adjointable right-inverse of $T^{*}$ and the sequence $\left\{e_{w}\right\}_{w \in \Omega}$ is the standard basis for $L^{2}(\Omega, \mathcal{A})$.
\end{proposition}
\begin{proof}
	By Lemma \ref{Lemma7} and \cite[Proposition 3.11]{Thesis-Jing-2006}, the proof is clear.
\end{proof}	

\section*{Acknowledgments}
It is our great pleasure to thank the referee for his careful reading of the paper and for several helpful suggestions.

\end{document}